\documentclass[onefignum,onetabnum]{siamart251216}
\usepackage{amsfonts}
\usepackage{bm}
\usepackage{algorithm}
\usepackage{algpseudocode}
\usepackage{placeins}
\usepackage{url}
\usepackage{listings}
\usepackage{xspace}
\usepackage{subcaption}

\newcommand{\dd}{\,\mathrm{d}}
\newcommand{\ii}{\mathrm{i}}
\newcommand{\dt}{\Delta t}

\newcommand{\dV}{\Delta V}

\newcommand{\kappaC}{\kappa}
\newcommand{\xx}{\bm{x}}
\newcommand{\vv}{\bm{v}}
\newcommand{\VV}{\bm{V}}
\newcommand{\pp}{\bm{p}}
\newcommand{\PP}{\bm{P}}
\newcommand{\AV}{\bm{A}}
\newcommand{\uu}{\bm{u}}
\newcommand{\UU}{\bm{U}}
\newcommand{\JJ}{\bm{J}}
\newcommand{\EE}{\bm{E}}
\newcommand{\BB}{\bm{B}}

\newcommand{\ttau}{\bm{\tau}}
\newcommand{\grad}{\nabla}
\newcommand{\curl}{\nabla\times}
\newcommand{\divg}{\nabla\cdot}
\newcommand{\gradh}{\nabla_h}
\newcommand{\curlh}{\nabla_h\times}
\newcommand{\divgh}{\nabla_h\cdot}

\newcommand{\half}{n+\frac12}
\newcommand{\inner}[2]{\left\langle #1,#2\right\rangle_h}

\newcommand{\DA}{\mathsf{D}}
\newcommand{\KA}{\mathsf{K}}
\newcommand{\Sbar}{\overline{S}}
\newcommand{\vbar}{\overline{\vv}}
\newcommand{\Ubar}{\overline{\UU}}
\newcommand{\Ebar}{\overline{\EE}}

\newcommand{\gphibar}{\overline{\grad\phi}}
\newcommand{\Ah}{\AV_h}

\newcommand{\Wh}{W}

\newcommand{\Kkin}{\mathcal K}

\newcommand{\Etot}{\mathcal{E}}

\newcommand{\bt}{\bm t}
\newcommand{\bs}{\bm s}
\newcommand{\bR}{\bm R}

\newcommand{\maybegraphics}[3][]{%
  \IfFileExists{#2}{\includegraphics[#1]{#2}}{%
    \fbox{\parbox[c][#3][c]{0.92\linewidth}{\centering\small Placeholder for \texttt{\detokenize{#2}}.\\[0.5em]
    Copy the diagnostic figure generated by the C++/Python/Jupyter workflow into the \texttt{figures/} directory.}}}}
\lstdefinestyle{deck}{basicstyle=\ttfamily\scriptsize,breaklines=true,columns=fullflexible,frame=single}

\ifdefined\newsiamremark
  \newsiamremark{remark}{Remark}
\fi

\title{An Energy-Conserving Unstaggered Electromagnetic-Potential Particle-in-Cell Method, Part II: Fully Relativistic Formulation with an Adapted Higuera--Cary Pusher}

\author{Andrew J. Christlieb\thanks{Department of Computational Mathematics, Science and Engineering, Michigan State University, East Lansing, MI.}\and Luis Chac\'{o}n\thanks{Los Alamos National Laboratory, Los Alamos, NM}\and Sining Gong\thanks{Corresponding author. Department of Computational Mathematics, Science and Engineering, Michigan State University, East Lansing, MI}(\email{gongsini@msu.edu}).}

\headers{Energy-Conserving Relativistic Particle Methods}{A. J. Christlieb, L. Chac\'{o}n, and S. Gong}

\usepackage{xcolor}
\author{Andrew J. Christlieb\thanks{Department of Computational
Mathematics, Science and Engineering, Michigan State University, East Lansing,
MI 48824, USA.\funding{This author's work was supported by
AFOSR grant FA9550-24-1-0254, DOE grant DE-SC0023164, DOE/NNSA grant
DE-NA0004265, and ONR grant N00014-24-1-2242.}}
\and Luis Chac\'{o}n\thanks{Los Alamos National Laboratory (retired),
Los Alamos, NM 87545, USA.\funding{This author's contribution
was funded by the Office of Advanced Scientific Computing Research of the
U.S. Department of Energy and performed at Los Alamos National Laboratory
under contract 89233218CNA000001.}}
\and Sining Gong\thanks{Corresponding author. Department of
Computational Mathematics, Science and Engineering, Michigan State University,
East Lansing, MI 48824, USA (\email{gongsini@msu.edu}).}}

\begin{document}
\maketitle

\begin{abstract}
We develop a fully relativistic extension of the energy-conserving, unstaggered electromagnetic-potential particle-in-cell method introduced in our earlier nonrelativistic work.  As in the nonrelativistic generalized-momentum formulation, the fields are advanced by a Crank--Nicolson discretization of the Lorenz-gauge potential equations, the current is deposited from particle orbits, and charge is advanced through the discrete continuity equation.  The new ingredient is a generalized-momentum adaptation of the Higuera--Cary relativistic particle pusher.  The vector-potential force is decomposed into an orbit-discrete-gradient part, which enforces the same finite-difference chain rule used in our earlier nonrelativistic formulation, and a skew part, which generates a Higuera--Cary magnetic rotation in mechanical momentum.  For prescribed fields and frozen orbit data, this generalized-momentum Higuera--Cary map is a unit-Jacobian conjugacy of the mechanical Higuera--Cary map and preserves particle phase volume in $(\xx,\PP)$.  For the fully coupled implicit step, the same orbit data define the current deposit, mesh-to-particle gather, vector-potential chain rule, and relativistic kinetic-energy secant velocity.  The resulting particle work cancels the Crank--Nicolson field-energy balance, yielding exact relativistic total-energy conservation up to nonlinear solver tolerance, orbit-quadrature error, and roundoff. A self-adjoint Fourier projection removes modes on even-grid Nyquist planes while preserving the particle--grid work identity. Numerical tests are organized around cold relativistic two-stream and Weibel/filamentation instabilities, with diagnostics for energy drift, Gauss's law, the Lorenz gauge, particle--grid work, and the orbit chain rule.
\end{abstract}

\begin{keywords}
particle-in-cell, Vlasov--Maxwell, generalized momentum, Higuera--Cary, Lorenz gauge, Crank--Nicolson, relativistic plasma, energy conservation, volume preservation
\end{keywords}

\begin{MSCcodes}
65M12, 65M22, 65M75, 65P10, 78M31
\end{MSCcodes}

\section{Introduction}
\label{sec:introduction}

Particle-in-cell (PIC) methods approximate kinetic plasma dynamics by coupling Lagrangian particles to Eulerian electromagnetic fields.  Their long-time reliability depends on more than formal consistency: the current deposition must be compatible with charge conservation, the field update must preserve the electromagnetic involutions, the particle-to-mesh and mesh-to-particle maps must represent the same exchange of work, and the particle pusher should respect the geometric structure of the Vlasov--Maxwell system.  These considerations have shaped electromagnetic PIC from the classical charge-conserving algorithms \cite{BirdsallLangdon1985,HockneyEastwood1988,VillasenorBuneman1992,Esirkepov2001,Verboncoeur2005} through modern implicit and structure-preserving formulations.

Energy conservation has a particularly long history in plasma simulation.  The foundational work of Lewis and Langdon identified discrete particle-field work as the mechanism behind numerical heating and total-energy balance \cite{Lewis1970,Langdon1973}.  Direct-implicit and implicit-moment methods relaxed explicit stability restrictions and motivated fully implicit PIC formulations \cite{Mason1981,CohenLangdonFriedman1982,BrackbillForslund1982,LangdonCohenFriedman1983}.  Later energy-conserving implicit PIC methods sharpened the fully discrete principle: the current appearing in the field equation and the particle work appearing in the kinetic-energy equation must be two representations of the same orbit-averaged exchange \cite{ChenChaconBarnes2011,MarkidisLapenta2011}.  That principle has since been extended to electromagnetic, mapped-mesh, Darwin, boundary-condition, semi-implicit, relativistic, and local-conservation settings \cite{ChaconChen2016,ChaconChen2019,BarnesChacon2021,ChenChaconYin2020,Lapenta2017,ChenToth2019,CamposPintoPages2022,ChaconChen2025}.

A complementary line of work develops geometric and variational PIC methods.  Variational electromagnetic PIC derives semidiscrete or fully discrete equations from an action principle, so that discrete gauge symmetry implies a discrete Gauss law \cite{SquireQinTang2012,EvstatievShadwick2013}.  GEMPIC and compatible finite-element formulations preserve Hamiltonian, Poisson, or Casimir structure and can be combined with energy-conserving time propagation \cite{XiaoQinLiu2015,KrausKormannMorrisonSonnendrucker2017,KormannSonnendrucker2021,CamposPintoKormannSonnendrucker2022}.  Recent explicit and nearly explicit approaches show that exact work balance can also be enforced through constrained splittings or local corrections, including in relativistic regimes \cite{Gonoskov2024,RicketsonHu2025}.  These developments frame the present work: we seek a relativistic PIC method that preserves the potential formulation, the Lorenz-gauge/Gauss-law structure, exact total energy, and the favorable phase-volume property of a relativistic magnetic rotation.

The method builds on the generalized-momentum potential formulation of Christlieb, Sands, and White \cite{ChristliebSandsWhitePartI,ChristliebSandsWhitePartII,ChristliebSandsWhitePartIII}.  In that formulation particles are advanced with the generalized momentum
\begin{equation*}
  \PP=\pp+q\Ah(\xx,t).
\end{equation*}
%Under this representation, 
The update for $\PP$ is expressed in a form such that the canonical force only contains spatial derivatives of the potentials.  This avoids an explicit temporal approximation of $\partial_t\AV$ in the particle force.  The current is mapped first, and charge is then advanced from a compatible discrete continuity equation.  This current-then-continuity source ordering propagates the Lorenz gauge and Gauss's law in the unstaggered potential formulation.  The work in \cite{ChristliebChaconGong2026EC}, hereafter referred to as \textbf{Part I}, added orbit-averaged scatter/gather maps and an orbit-discrete-gradient of $\Ah$ so that the particle-side vector-potential chain rule is exact along each orbit.  That construction yields exact total-energy conservation for the nonrelativistic fully converged Crank--Nicolson (CN) fixed point method, which serves as a foundation for this work.

The present paper extends the construction in \cite{ChristliebChaconGong2026EC} to fully relativistic particles.  The main new step is to replace the additive magnetic part of the generalized-momentum (GM) push by a Higuera--Cary (HC) rotation.  The HC pusher is a relativistic, second-order, volume-preserving charged-particle integrator that recovers the correct relativistic $\EE\times\BB$ drift in the comparisons of Higuera and Cary \cite{HigueraCary2017}; its robustness is part of the broader volume-preserving viewpoint on Boris-type methods \cite{QinZhangXiao2013,HeZhouSun2015}.  In generalized momentum, the orbit-discrete-gradient object from Part I is kept in the operator-applied-to-field form $\DA_i\AV$.  Its transpose is decomposed as
\begin{subequations}
    \begin{align}
    (\DA_i\AV)^T\vbar_i
  & =(\DA_i\AV)\vbar_i+(\KA_i\AV)\vbar_i, \label{eq:D-split-force} \\
  \KA_i\AV & =(\DA_i\AV)^T-\DA_i\AV,  
  \;\;
  (\KA_i\AV)^T =-(\KA_i\AV). \label{eq:D-split-skew} 
\end{align}\label{eq:D-split}
\end{subequations}
The $(\DA_i\AV)\vbar_i$ part participates in the endpoint transformation and the discrete cancellation of $\AV_t$, while the skew part $(\KA_i\AV)\vbar_i$ is used as the magnetic generator in the adapted HC rotation.  For prescribed fields and frozen orbit data, this gives a unit-Jacobian generalized-momentum map in $(\xx,\PP)$.  For the fully coupled implicit PIC step, the same orbit weights, the same chain-rule matrix, and a relativistic kinetic-energy secant velocity give exact total-energy conservation.

The rest of the paper is organized as follows.  \Cref{sec:potential} states the relativistic potential model and the orbit-averaged particle-mesh discretization inherited from Part I.  \Cref{sec:gmhc-volume} defines the generalized-momentum HC particle map and proves the fixed-field particle-volume result.  \Cref{sec:energy} proves the fully coupled relativistic energy balance.  \Cref{sec:results} presents  the results for the cold relativistic two-stream instability and cold relativistic  Weibel instability tests, together with the diagnostics needed to verify energy conservation, Gauss's law, the Lorenz gauge, particle-grid work, and the orbit chain rule.

\section{Relativistic model and orbit-averaged discretization}
\label{sec:potential}

This section fixes the notation used throughout the paper and separates the parts of the method that are inherited from Part I from the new relativistic particle update introduced in \cref{sec:gmhc-volume}.  The field solve, current-then-continuity source ordering, symmetric orbit-averaged particle-mesh maps, and orbit-discrete-gradient chain rule are inherited from our nonrelativistic companion paper \cite{ChristliebChaconGong2026EC}. The present paper keeps those devices and changes the particle mechanics: the nonrelativistic midpoint generalized-momentum push is replaced by a fully relativistic generalized-momentum Higuera--Cary update.

\subsection{Nondimensional relativistic potential model}
\label{subsec:rel-potential-model}

We use the same nondimensional formulation that was introduced in \cite{ChristliebSandsWhitePartI}.  Let
$
  x=L\tilde x,\,
  t=T\tilde t,
\,
  v=\frac{L}{T}\tilde v,
\,
  \phi=\phi_0\tilde\phi,
\,
  \AV=A_0\tilde\AV,
$
with
$
  \phi_0=\frac{ML^2}{QT^2},
 \,
  A_0=\frac{ML}{QT},
\,
  \kappaC=\frac{cT}{L}.
$
Tildes are dropped below.  In the Lorenz gauge, the nondimensional potential equations are
\begin{subequations}
\label{eq:potential-wave}
\begin{align}
  \frac{1}{\kappaC^2}\partial_{tt}\phi-\Delta\phi &= \sigma_1\rho,\label{eq:phi-wave}\\
  \frac{1}{\kappaC^2}\partial_{tt}\AV-\Delta\AV &= \sigma_2\JJ,\label{eq:A-wave}\\
  \frac{1}{\kappaC^2}\partial_t\phi+\divg\AV &=0.\label{eq:lorenz}
\end{align}
\end{subequations}
Here $\kappaC$ is the nondimensional speed of light, and $\sigma_1$ and $\sigma_2$ are the source coefficients determined by the reference scales.  The Maxwell-compatible scaling used in Part I satisfies
\begin{equation}
  \sigma_1=\kappaC^2\sigma_2.
  \label{eq:Maxwell_scaling}
\end{equation}
The algebraic energy argument below only requires that the coefficients used in the potential update, the midpoint Maxwell form, and the field energy be chosen consistently.
The nondimensional electromagnetic fields are recovered from the potentials by
\begin{equation}
  \EE=-\grad\phi-\partial_t\AV,
  \qquad
  \BB=\curl\AV.
  \label{eq:E-B-cont}
\end{equation}
Thus $\divg\BB=0$ is built into the representation. % In the normalized numerical examples of \cref{sec:results}, we take $c=\kappaC=1$, $\epsilon_0=1$, $\mu_0=1$, and $n_0=1$, so that the total electron plasma frequency is $\omega_p^2=n_0e^2/(m_e\epsilon_0)=1$ for the normalized electron charge-to-mass ratio $q/m=-1$.

\subsection{Relativistic generalized-momentum particles}
\label{subsec:rel-particles}

For particle $i$, let $m_i$ and $q_i$ denote mass and charge.  The mechanical momentum, Lorentz factor, and velocity are
\begin{equation}
  \pp_i=m_i\gamma_i(\vv_i)\vv_i,
  \qquad
  \gamma_i(\vv)=\frac{1}{\sqrt{1-|\vv|^2/\kappaC^2}},
  \qquad
  \vv_i=\VV_i(\pp_i)=\frac{\pp_i}{m_i\gamma_i(\pp_i)},
  \label{eq:rel-particle-def}
\end{equation}
where
  $\gamma_i(\pp)=\sqrt{1+\frac{|\pp|^2}{m_i^2\kappaC^2}}.$
The generalized momentum is
\begin{equation}
  \PP_i=\pp_i+q_i\AV(\xx_i,t).
  \label{eq:rel-P-def}
\end{equation}
In canonical variables the relativistic Hamiltonian is
\begin{equation}
  H_i(\xx_i,\PP_i,t)
  =m_i\kappaC^2\left(\gamma_i\bigl(\PP_i-q_i\AV(\xx_i,t)\bigr)-1\right)
   +q_i\phi(\xx_i,t).
  \label{eq:rel-Hamiltonian}
\end{equation}
Hamilton's equations give
\begin{subequations}
\label{eq:canonical-continuous}
\begin{align}
  \dot{\xx}_i &= \vv_i = \VV_i\bigl(\PP_i-q_i\AV(\xx_i,t)\bigr),\label{eq:xdot-canonical}\\
  \dot{\PP}_i &= -q_i\grad\phi(\xx_i,t)+q_i\bigl(\grad\AV(\xx_i,t)\bigr)^T\vv_i.\label{eq:Pdot-canonical}
\end{align}
\end{subequations}
We note that the time derivative of $\AV$ is not discarded.  It cancels when the mechanical Lorentz equation is written in the generalized momentum variable:
\begin{equation*}
\begin{split}
  \dot{\PP}_i
  &=\dot{\pp}_i+q_i\frac{\dd}{\dd t}\AV(\xx_i(t),t) \\
  &=q_i\bigl(-\grad\phi-\partial_t\AV+\vv_i\times\curl\AV\bigr)
    +q_i\bigl(\partial_t\AV+(\grad\AV)\vv_i\bigr)  \\
  &=-q_i\grad\phi+q_i(\grad\AV)^T\vv_i .
\end{split}
\label{eq:At-cancellation}
\end{equation*}
This cancellation is the generalized-momentum advantage that Part I exploited in the nonrelativistic setting.  In the relativistic method, the same cancellation is retained, but the magnetic part of the particle update is advanced by an adapted Higuera--Cary rotation rather than by an additive midpoint force.  This is the subject of sections 3 and 4.

\subsection{Mesh-based field values and the Crank--Nicolson potential update}
\label{subsec:mesh-fields}

Throughout the paper, a \emph{mesh field} means a scalar or vector array carried on the mesh.  Thus $\phi_g$, $\rho_g$, $\AV_g$, $\UU_g$, $\JJ_g$, $\EE_g$, and $\BB_g$ are all mesh-based values, even though only $\EE$ and $\BB$ are electromagnetic fields in the physical sense.  This convention is useful because the particle-mesh maps below apply to any mesh-based value.  The same notation as in Part I is used for the mesh inner product and cell volume,
\begin{equation}
  \inner{F}{G}=\sum_g F_g\cdot G_g\dV.
\end{equation}
The spatial operators $\gradh$, $\divgh$, $\curlh$, and $\Delta_h$ are assumed to satisfy the usual periodic summation-by-parts identities.  In the implementation described here they are spectral operators on a periodic mesh \cite{GottliebOrszag1977,Boyd2001,Trefethen2000,CanutoHussainiQuarteroniZang2006}.

Introduce the auxiliary variables $ \psi=\partial_t\phi,\, \UU=\partial_t\AV.$
The field step is the same Crank–Nicolson update of the first-order potential system used in Part I:
\begin{subequations}
\label{eq:CN-field}
\begin{align}
  \frac{\phi^{n+1}-\phi^n}{\dt} &= \psi^{\half},\label{eq:cn-phi}\\
  \frac{\psi^{n+1}-\psi^n}{\dt} &= \kappaC^2\Delta_h\phi^{\half}+\kappaC^2\sigma_1\rho^{\half},\label{eq:cn-psi}\\
  \frac{\AV^{n+1}-\AV^n}{\dt} &= \UU^{\half},\label{eq:cn-A}\\
  \frac{\UU^{n+1}-\UU^n}{\dt} &= \kappaC^2\Delta_h\AV^{\half}+\kappaC^2\sigma_2\JJ^{\half}.\label{eq:cn-U}
\end{align}
\end{subequations}
Here $f^{\half}=(f^{n+1}+f^n)/2$.  The current is deposited from the accepted particle orbits first, and charge is then advanced by the same discrete divergence used by the potential equations:
\begin{equation}
  \frac{\rho^{n+1}-\rho^n}{\dt}+\divgh\JJ^{\half}=0,
  \qquad
  \rho^{\half}=\frac{1}{2}(\rho^{n+1}+\rho^n).
  \label{eq:cn-continuity}
\end{equation}
With the midpoint Lorenz gauge, 
\begin{equation}\label{eqn:gauge conditions}
    \psi^{n+1/2}/\kappa^2+\nabla_h\cdot \AV^{\half}=0,
\end{equation}
this current-then-continuity ordering is the discrete mechanism that propagates the Lorenz gauge and Gauss's law provided the initial discrete Lorenz and Gauss constraints are satisfied.  This source ordering was developed in the generalized-momentum potential PIC papers and is retained unchanged here.

The discrete electromagnetic fields used in the energy balance are
\begin{equation}
  \EE_g^n=-\gradh\phi_g^n-\UU_g^n,
  \qquad
  \BB_g^n=\curlh\AV_g^n.
  \label{eq:fields-from-potentials-discrete}
\end{equation}
The compatible field energy is
\begin{equation}
  W^n=\sum_g\left(\frac{1}{2\sigma_1}|\EE_g^n|^2+
  \frac{1}{2\sigma_2}|\BB_g^n|^2\right)\dV.
  \label{eq:field-energy-section2}
\end{equation}

\subsection{Symmetric orbit-averaged maps between particles and mesh}
\label{subsec:orbit-averaged-maps}

Let $S_g(\xx)$ be the periodic particle shape associated with mesh point $g$.  A mesh vector potential is gathered to a particle through the interpolant
\begin{equation}
  \Ah^n(\xx)=\sum_g \AV_g^n S_g(\xx).
  \label{eq:mesh-interpolant}
\end{equation}
The same shape family is used in both directions.  The difference from a standard midpoint PIC map is that the weights are averaged over the particle orbit rather than evaluated at a single point.

For one step, define the unwrapped straight orbit
\begin{equation}
  \xx_i(s)=\xx_i^n+s\Delta\xx_i,
  \qquad
  0\le s\le 1,
  \qquad
  \Delta\xx_i=\xx_i^{n+1}-\xx_i^n=\dt\,\vbar_i.
  \label{eq:rel-orbit}
\end{equation}
The orbit velocity $\vbar_i$ is determined by the relativistic particle update in \cref{sec:gmhc-volume,sec:energy}; for the present section it is enough that the same $\vbar_i$ defines the orbit, the current, and the work identity.  The orbit-averaged shape is
\begin{equation}
  \Sbar_{ig}=\int_0^1 S_g(\xx_i(s))\dd s.
  \label{eq:orbit-average-shape}
\end{equation}
The particle-to-mesh current map is
\begin{equation}
  \JJ_g^{\half}=\frac{1}{\dV}\sum_i q_i\vbar_i\Sbar_{ig}.
  \label{eq:orbit-current}
\end{equation}
The symmetric mesh-to-particle gather for any mesh-based vector value $\bm{G}_g$ is
\begin{equation}
  \overline{\bm{G}}_i=\sum_g \bm{G}_g\Sbar_{ig}.
  \label{eq:orbit-gather-general}
\end{equation}
Using the same orbit weights in \cref{eq:orbit-current,eq:orbit-gather-general} gives the exact scatter/gather adjointness identity
\begin{equation}
  \sum_i q_i\vbar_i\cdot\overline{\bm{G}}_i
  =\sum_g \JJ_g^{\half}\cdot \bm{G}_g\dV.
  \label{eq:scatter-gather-work}
\end{equation}
This identity is the particle-mesh work equivalence used in the total-energy proof.  It is independent of the relativistic form of the particle pusher; all that matters is that current deposition and field gathering use the same orbit-averaged map.

\subsection{\texorpdfstring{Orbit-averaged discrete chain rule and discrete gradient}{Orbit-averaged discrete chain rule and discrete gradient}}
\label{subsec:orbit-discrete-gradient}

The second Part I device retained here is the exact finite-difference chain rule for the mesh-interpolated vector potential along the same orbit.  Linearly interpolate the mesh vector potential during the step,
\begin{equation}
  \AV_g(s)=(1-s)\AV_g^n+s\AV_g^{n+1},
  \label{eq:A-g-s}
\end{equation}
so that the particle-sampled vector potential along the orbit is
\begin{equation}
  \mathcal A_i(s)=\sum_g \AV_g(s)S_g(\xx_i(s)).
  \label{eq:Acal-s}
\end{equation}
Using \cref{eq:cn-A}, define the orbit-averaged gather of $\UU^{\half}$ by
\begin{equation}
  \Ubar_i=\sum_g \UU_g^{\half}\Sbar_{ig}.
  \label{eq:Ubar-def}
\end{equation}
The orbit-discrete-gradient matrix is
\begin{equation}
  [\DA_i\AV]_{\ell j}
  =\sum_g\int_0^1 A_{\ell,g}(s)\,\partial_{x_j}S_g(\xx_i(s))\dd s,
  \qquad
  1\le \ell,j\le d.
  \label{eq:D-def}
\end{equation}
Equivalently, $\DA_i\AV$ is the spatial Jacobian of the time-interpolated particle interpolant, averaged along the accepted orbit.  It is not an independently gathered mesh gradient.

\begin{lemma}[Exact orbit-averaged chain rule]
\label{lem:exact-chain-rule}
The definitions \eqref{eq:orbit-average-shape}, \eqref{eq:A-g-s}, \eqref{eq:Ubar-def}, and \eqref{eq:D-def} imply
\begin{equation}
  \Ah^{n+1}(\xx_i^{n+1})-\Ah^n(\xx_i^n)
  =\dt\,\Ubar_i+\dt\,(\DA_i\AV)\vbar_i.
  \label{eq:A-chain-rule}
\end{equation}
\end{lemma}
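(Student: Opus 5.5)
The plan is to apply the fundamental theorem of calculus to the scalar-in-$s$ (vector-valued) function $\mathcal A_i(s)$ of \cref{eq:Acal-s} on $[0,1]$. By \cref{eq:A-g-s} and \cref{eq:rel-orbit}, its endpoints are
\begin{equation*}
  \mathcal A_i(0)=\sum_g \AV_g^n S_g(\xx_i^n)=\Ah^n(\xx_i^n),
  \qquad
  \mathcal A_i(1)=\Ah^{n+1}(\xx_i^{n+1}),
\end{equation*}
using the interpolant \cref{eq:mesh-interpolant}. Hence the left side of \cref{eq:A-chain-rule} equals $\int_0^1 \mathcal A_i'(s)\dd s$, provided $s\mapsto S_g(\xx_i(s))$ is absolutely continuous. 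This holds for the standard piecewise-polynomial B-spline shapes, which are Lipschitz for order at least one. The orbit is unwrapped, so periodic wrapping causes no jump.

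Next I differentiate with the product rule:
\begin{equation*}
  \mathcal A_i'(s)=\sum_g \bigl(\AV_g^{n+1}-\AV_g^n\bigr)S_g(\xx_i(s))
  +\sum_g \AV_g(s)\,\grad S_g(\xx_i(s))\cdot\Delta\xx_i .
\end{equation*}
For the first term:
\begin{itemize}
  \item by \cref{eq:cn-A}, $\AV_g^{n+1}-\AV_g^n=\dt\,\UU_g^{\half}$;
  \item integrating in $s$ and using \cref{eq:orbit-average-shape}, the term becomes $\dt\sum_g\UU_g^{\half}\Sbar_{ig}=\dt\,\Ubar_i$ by \cref{eq:Ubar-def}.
\end{itemize}
For the second term, componentwise the $\ell$-th entry is $\sum_j\sum_g A_{\ell,g}(s)\,\partial_{x_j}S_g(\xx_i(s))\,\Delta x_{i,j}$. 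Since $\Delta\xx_i$ does not depend on $s$, integrating over $[0,1]$ and exchanging the finite sums with the integral gives $\sum_j[\DA_i\AV]_{\ell j}\Delta x_{i,j}$ by \cref{eq:D-def}. With $\Delta\xx_i=\dt\,\vbar_i$ this is $\dt\,(\DA_i\AV)\vbar_i$.

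The main point needing care is regularity of the shape along the orbit, since $\partial_{x_j}S_g$ may be discontinuous at cell boundaries for low-order splines. I would handle this by noting that only a.e.\ differentiability and absolute continuity are needed for the fundamental theorem of calculus. The straight orbit crosses finitely many cell faces, so $S_g(\xx_i(s))$ is piecewise smooth and continuous in $s$. Summing the two contributions then gives \cref{eq:A-chain-rule} exactly, with no quadrature error when the $s$-integrals in \cref{eq:orbit-average-shape,eq:D-def} are evaluated exactly.
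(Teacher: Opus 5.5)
Your proposal is correct and follows the paper's proof essentially verbatim. Both differentiate $\mathcal A_i(s)$ in $s$, replace $\AV_g^{n+1}-\AV_g^n$ by $\dt\,\UU_g^{\half}$ via \eqref{eq:cn-A}, use $\Delta\xx_i=\dt\,\vbar_i$, and integrate over $[0,1]$ to obtain $\dt\,\Ubar_i+\dt\,(\DA_i\AV)\vbar_i$; your remarks on absolute continuity of $s\mapsto S_g(\xx_i(s))$ for splines of degree at least one, and on the unwrapped orbit, only make explicit what the paper leaves implicit and are automatic for the $C^1$ quadratic shapes used here.
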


\begin{proof}
Differentiate \eqref{eq:Acal-s} with respect to $s$:
\begin{equation}
  \frac{\dd \mathcal A_i}{\dd s}
  =\sum_g(\AV_g^{n+1}-\AV_g^n)S_g(\xx_i(s))
   +\sum_g\AV_g(s)\grad S_g(\xx_i(s))\cdot\Delta\xx_i .
\end{equation}
Using \eqref{eq:cn-A} and $\Delta\xx_i=\dt\vbar_i$, then integrating from $s=0$ to $s=1$, gives \eqref{eq:A-chain-rule}.
\end{proof}

This chain rule is the algebraic bridge between generalized momentum and energy conservation.  In Part I it cancels the vector-potential contribution in the nonrelativistic kinetic-energy balance.  In the present paper it plays the same role in the relativistic balance, while the skew part of the same orbit-gradient object supplies the effective magnetic generator for the Higuera--Cary rotation.  \textcolor{black}{To make the dependence on the mesh vector potential explicit, we retain the notation $\DA_i\AV$ and $\KA_i\AV$ used in Part I; see
\eqref{eq:D-split}. These expressions represent operators applied to
the mesh vector potential and appear in this form in the relativistic
particle update. } Exact conservation requires the orbit integrals in \cref{eq:orbit-average-shape,eq:D-def} to be evaluated as exact integrals of the pulled-back spline functions.  For compactly supported B-splines these functions are piecewise polynomial in $s$; the implementation therefore splits each orbit at crossed spline knots before applying Gaussian quadrature, as in Part I.

\section{Generalized-momentum Higuera--Cary push and fixed-field volume preservation}
\label{sec:gmhc-volume}

This section isolates the new particle map.  The purpose is to import the favorable volume-preservation property of the relativistic Higuera--Cary (HC) magnetic rotation into the generalized-momentum potential formulation without reintroducing an explicit $A_t$ force gather.  The HC pusher is a second-order leapfrog method for relativistic charged particles: electric forces are applied as two half-impulses, while the magnetic part is applied as a centered rotation in mechanical momentum.  Historically, HC can be viewed as a relativistic member of the Boris family of leapfrog rotation methods \cite{Boris1970,HigueraCary2017}; however, the definitions and proofs below are written directly in HC variables.  

\subsection{HC as a leapfrog rotation map}
\label{subsec:hc-leapfrog-rotation}

For one particle, with the fields held fixed during one particle substep, the relativistic Lorentz equations in mechanical momentum are
\begin{equation}
  \dot{\xx}=\vv(\pp),
  \;\;
  \dot{\pp}=q\bigl(\EE+\vv(\pp)\times\BB\bigr),
  \;\;
  \vv(\pp)=\frac{\pp}{m\gamma(\pp)},
  \;\;
  \gamma(\pp)=\sqrt{1+\frac{|\pp|^2}{m^2\kappaC^2}}.
  \label{eq:rel-lorentz-hc}
\end{equation}
The electric term can change the particle energy.  The magnetic term is different: it is perpendicular to the velocity and therefore should rotate the momentum without changing its magnitude when $\EE=0$.  The HC update is designed to respect this distinction at the discrete level.

In a conventional leapfrog notation the particle position is stored at integer times and the mechanical momentum is stored at half times.  The HC step has the schematic form
% \begin{subequations}
% \label{eq:hc-leapfrog-template}
% \begin{align}
%   \pp^- = \pp^{n-1/2}+\frac{q\dt}{2}\EE^n, & \qquad \pp^+ = \bR_{\rm HC}^{p}(\pp^-;\BB^n),
%   \label{eq:hc-template-rot}\\
%   \pp^{n+1/2} = \pp^++\frac{q\dt}{2}\EE^n, & \qquad \xx^{n+1} = \xx^n+\dt\,\vv(\pp^{n+1/2}).
%   \label{eq:hc-template-drift}
% \end{align}
% \end{subequations}
\begin{subequations}
\label{eq:hc-leapfrog-template}
\begin{align}
  \pp^- & = \pp^{n-1/2}+\frac{q\dt}{2}\EE^n, \label{eq:hc-template-plus} \\ 
  \pp^+ & = \bR_{\rm HC}^{p}(\pp^-;\BB^n),
  \label{eq:hc-template-rot}\\
  \pp^{n+1/2} & = \pp^++\frac{q\dt}{2}\EE^n, \label{eq:hc-template-minus}\\
  \xx^{n+1} & = \xx^n+\dt\,\vv(\pp^{n+1/2}).
  \label{eq:hc-template-drift}
\end{align}
\end{subequations}
The intermediate momenta $\pp^-$ and $\pp^+$ are not additional time levels.  They are the momentum after the first electric half-impulse and after the magnetic rotation, respectively.  In the orbit-averaged method developed below, the last line is replaced by the accepted particle orbit and its averaged velocity $\vbar_i$, but the same kick--rotation--kick momentum geometry is retained.

The magnetic rotation in \cref{eq:hc-template-rot} is the key point.  HC writes the magnetic substep in the centered form
\begin{equation}
  \pp^+-\pp^-
  =q\dt\left(\frac{\pp^++\pp^-}{2m\Gamma}\right)\times\BB,
  \label{eq:hc-centered-rotation-overview}
\end{equation}
where $\Gamma$ is the HC-centered Lorentz factor for this magnetic
rotation.  In implicit centered form it is
\begin{equation}
  \Gamma
  =
  \left(
  1+
  \left|
  \frac{\pp^++\pp^-}{2m\kappaC}
  \right|^2
  \right)^{1/2}.
  \label{eq:Gamma-centered-overview}
\end{equation}
The explicit HC algorithm described in the next subsection computes
this same scalar from $\pp^-$ and $\BB$ before applying the rotation.
For a fixed $\Gamma$, this equation is an exact Cayley rotation in momentum space.  The zero-work property is immediate: taking the dot product of \eqref{eq:hc-centered-rotation-overview} with $\pp^++\pp^-$ gives
\begin{equation}
  |\pp^+|^2-|\pp^-|^2=0,
\end{equation}
because the right-hand side of \cref{eq:hc-centered-rotation-overview} is perpendicular to $\pp^++\pp^-$.  Thus the magnetic substep preserves $|\pp|$, and therefore preserves the relativistic kinetic energy, in a purely magnetic field.  The practical HC formula evaluates this rotation explicitly using two cross products after computing the HC-centered Lorentz factor $\Gamma=\gamma_{\rm HC}$; the complete algebraic definition is given in \cref{def:hc-normalized,def:hc-dimensional}.

The choice of $\Gamma$ is what distinguishes HC from other relativistic leapfrog pushers.  Higuera and Cary choose $\gamma_{\rm HC}$ from the incoming momentum and magnetic field so that the complete nonlinear magnetic map is volume preserving and gives the correct relativistic $\EE\times\BB$ drift behavior \cite{HigueraCary2017,Vay2008}.  

The design principle for integrators of this type is this: electric half-impulses and position drifts are unit-determinant shears, while the HC magnetic update is a unit-Jacobian rotation map in momentum.  Their composition therefore preserves fixed-field particle phase volume.  This property is weaker than symplecticity, but it prevents artificial compression or expansion of small phase-space volumes and is one reason these rotation-based charged-particle methods are robust in long integrations \cite{QinZhangXiao2013,HeZhouSun2015}.

Our generalized-momentum construction keeps this HC leapfrog-rotation structure but generates the rotation from potentials rather than from a directly gathered magnetic field.  The canonical momentum is
\begin{equation}
  \PP=\pp+q\AV.
\end{equation}
Applying the mechanical HC update directly requires the electric field $\EE=-\grad\phi-\AV_t$.  A direct force gather of $\AV_t$ would obscure the energy cancellation developed in Part I.  \textcolor{black}{Instead, we use the orbit-averaged vector-potential chain rule together with the decomposition in \eqref{eq:D-split-force}. This separates the skew magnetic contribution, generated by $\KA_i\AV$ as defined in
\eqref{eq:D-split-skew}, from the discrete-gradient contribution
involving $\DA_i\AV$, which participates in the cancellation of
the explicit $\AV_t$ term.} 
The role of this section is therefore to replace the magnetic rotation generated by $\BB$ in the HC map by the orbit-generated skew matrix $\KA_i\AV$, while keeping the notation explicit as $\DA_i\AV$ and $\KA_i\AV$.

\subsection{Mechanical Higuera--Cary rotation}
Before writing this in generalized momentum, we now review the magnetic-rotation map $\bR_{\rm HC}^p$ appearing in \cref{eq:hc-leapfrog-template} in detail. We then transfer this map to generalized momentum.  All variables are the nondimensional variables fixed in \cref{sec:potential}; in particular the speed of light is $\kappaC$.  The auxiliary variable $u=p/(m\kappaC)$ is used only to write the HC algebra compactly.  The definition is included explicitly so that the later generalized-momentum map contains no hidden particle-pusher conventions.

For a vector $\BB=(B_1,B_2,B_3)^T$, define the skew matrix $C(\BB)$ by
\begin{equation}
  C(\BB)=
  \begin{pmatrix}
  0&B_3&-B_2\\
  -B_3&0&B_1\\
  B_2&-B_1&0
  \end{pmatrix},
  \qquad C(\BB)\vv=\vv\times \BB .
  \label{eq:cross-matrix}
\end{equation}
This sign convention is used throughout the paper.  It differs by a sign from the common convention in which the matrix represents $\BB\times \vv$.

\begin{definition}[Normalized HC magnetic map]
\label{def:hc-normalized}
Fix one species and one time step, so that $q$, $m$, $\kappaC$, and $\dt$ are known.  For a given incoming normalized mechanical momentum
\begin{equation}
  \uu^- = \frac{\pp^-}{m\kappaC},
  \qquad \gamma^- = \gamma(\uu^-)=\sqrt{1+|\uu^-|^2},
\end{equation}
and a fixed magnetic field $\BB$, set
\begin{equation}
  \ttau=\frac{q\dt}{2m}\BB,
  \qquad
  \sigma=(\gamma^-)^2-|\ttau|^2.
  \label{eq:hc-tau-sigma}
\end{equation}
The HC Lorentz factor for the magnetic substep is the positive root
\begin{equation}
  \gamma_{\rm HC}(\uu^-;\BB)
  =\left\{
  \frac{1}{2}
  \left[
  \sigma+\bigl(\sigma^2+4(|\ttau|^2+(\uu^-\cdot\ttau)^2)\bigr)^{1/2}
  \right]
  \right\}^{1/2}.
  \label{eq:gammaHC}
\end{equation}
Define
\begin{equation}
  \bt_{\rm HC}=\frac{\ttau}{\gamma_{\rm HC}},
  \qquad
  \bs_{\rm HC}=\frac{2 \, \bt_{\rm HC}}{1+|\bt_{\rm HC}|^2},
  \label{eq:hc-ts}
\end{equation}
and then perform the two cross-product operations
\begin{subequations}
\label{eq:hc-explicit}
\begin{align}
  \uu^\star &= \uu^-+\uu^-\times \bt_{\rm HC},\label{eq:hc-ustar}\\
  \uu^+ &= \uu^-+\uu^\star\times \bs_{\rm HC}.\label{eq:hc-uplus}
\end{align}
\end{subequations}
The normalized HC magnetic map is the explicitly defined function
\begin{equation}
  \bR_{\rm HC}^{u}(\uu^-;\BB)=\uu^+,
  \label{eq:RHC-u-def}
\end{equation}
where $\uu^+$ is the value produced by \cref{eq:hc-tau-sigma,eq:gammaHC,eq:hc-ts,eq:hc-explicit}.
\end{definition}

\begin{definition}[Nondimensional mechanical-momentum HC magnetic map]
\label{def:hc-dimensional}
For an incoming mechanical momentum $\pp^-$ and fixed magnetic field $\BB$, both in the nondimensional variables of \cref{sec:potential}, the mechanical-momentum HC magnetic map is
\begin{equation}
  \bR_{\rm HC}^p(\pp^-;\BB)
  :=m\kappaC \, \bR_{\rm HC}^u\!\bigl(\pp^-/(m\kappaC);\BB\bigr).
  \label{eq:RHC-p-def}
\end{equation}
% Equivalently, $\bR_{\rm HC}^p(\pp^-;\BB)$ is obtained by applying
% Definition~\ref{def:hc-normalized} to $\uu^-=\pp^-/(m\kappaC)$
% and multiplying the resulting $\uu^+$ by $m\kappaC$.
% The species parameters $q$, $m$, $\kappaC$, and the time step $\dt$ are suppressed in the notation. Using Definition~\ref{def:hc-normalized}, the map can be written explicitly as
% The species parameters and time step are suppressed in the notation;
% the map uses the nondimensional $q$, $m$, $\kappaC$, and $\dt$
% of the particle being advanced.  Thus $\bR_{\rm HC}^p$ is explicitly specified by
% Definition~\ref{def:hc-normalized} together with the scaling
% in \eqref{eq:RHC-p-def}.  The same map in compact form is given by
% \begin{equation}
%   \uu^- = \frac{\pp^-}{m\kappaC},\qquad
%   \bt=\frac{\ttau}{\gamma_{\rm HC}(\uu^-;\BB)},\qquad
%   \bs=\frac{2\,\bt}{1+|\bt|^2},
%   \label{eq:RHC-ts-short}
% \end{equation}
% where $\ttau$ and $\gamma_{\rm HC}$ are given by \cref{eq:hc-tau-sigma,eq:gammaHC}.  Then
The map uses the nondimensional $q$, $m$, $\kappaC$, and $\dt$ of the particle being advanced; these parameters are suppressed in the notation. Definition~\ref{def:hc-normalized} and the scaling in \eqref{eq:RHC-p-def} fully specify the map. For its compact form, set
\begin{equation}
  \uu^- = \frac{\pp^-}{m\kappaC},\qquad
  \bt=\frac{\ttau}{\gamma_{\rm HC}(\uu^-;\BB)},\qquad
  \bs=\frac{2\,\bt}{1+|\bt|^2},
  \label{eq:RHC-ts-short}
\end{equation}
where $\ttau$ and $\gamma_{\rm HC}$ are given by \cref{eq:hc-tau-sigma,eq:gammaHC}. Then
\begin{equation}
\label{eq:RHC-one-line}
  \boxed{
  \begin{aligned}
  \bR_{\rm HC}^p(\pp^-;\BB) &= m\kappaC\,\uu^+, \, 
  \uu^+ = \uu^-+\left(\uu^-+\uu^-\times \bt\right)\times \bs .
  \end{aligned}}
\end{equation}
This boxed expression is the function $\bR_{\rm HC}^p$ used below.
\end{definition}

After the scalar $\gamma_{\rm HC}$ has been computed from \cref{eq:gammaHC}, the same update can be written as a Cayley transform.  With the cross-product convention \cref{eq:cross-matrix},
\begin{equation}
  \bR_{\rm HC}^p(\pp^-;\BB)
  =Q(\BB,\gamma_{\rm HC})\pp^-,
  \;\;
  Q(\BB,\Gamma)=
  \left(I-\frac{q\dt}{2m\Gamma}C(\BB)\right)^{-1}
  \left(I+\frac{q\dt}{2m\Gamma}C(\BB)\right).
  \label{eq:hc-cayley-B}
\end{equation}
Since $C(\BB)^T=-C(\BB)$, $Q(\BB,\Gamma)$ is an orthogonal determinant-one rotation for each fixed positive scalar $\Gamma$.  The determinant-one statement for the full HC magnetic substep should not be inferred only from this fixed-$\Gamma$ Cayley matrix, because $\gamma_{\rm HC}$ depends on $\uu^-$.  Higuera and Cary's result gives the volume-preservation of the complete nonlinear map \cref{eq:RHC-one-line}, including the dependence of $\gamma_{\rm HC}$ on $\uu^-$.  In the energy proof below, only skewness and magnetic zero work are needed; in the volume statement, we invoke the full HC volume-preserving map.

\subsection{Skew-matrix HC map used by the generalized-momentum method}

% The transpose of the orbit-discrete-gradient matrix $\DA_i\AV$ enters the canonical force. We write it as the original matrix plus the skew matrix $\KA_i\AV$, as defined in \cref{eq:D-split}. The first contribution participates in the endpoint chain-rule cancellation, while the skew contribution generates the HC magnetic rotation. This rotation does no work; total-energy conservation additionally requires the electric-work identity and compatible particle--mesh coupling.
% The $(\DA_i\AV)\vbar_i$ term participates in the endpoint transformation and in the cancellation of the explicit $\UU=\partial_t\AV$ contribution through \cref{eq:A-chain-rule}.   The skew term $(\KA_i\AV)\vbar_i$ is the magnetic part and is placed inside an HC rotation.
The transpose of the orbit-discrete-gradient matrix $\DA_i\AV$ enters the canonical force. We write it as the original matrix plus the skew matrix $\KA_i\AV$, as defined in \cref{eq:D-split}. The $(\DA_i\AV)\vbar_i$ term participates in the endpoint transformation and in the cancellation of the explicit $\UU=\partial_t\AV$ contribution through \cref{eq:A-chain-rule}. The skew term $(\KA_i\AV)\vbar_i$ is the magnetic part and generates the HC magnetic rotation. This rotation does no work; total-energy conservation additionally requires the electric-work identity and compatible particle--mesh coupling.

In three dimensions a skew matrix has an axial vector.  We use a generic skew matrix $M$ only for this algebraic axial-vector helper; the orbit object used by the PIC method is obtained by substituting $M=\KA_i\AV$.  The convention is $  M\vv = \vv\times \BB_{\rm eff}(M)$.
Thus, if
\begin{equation}
  M=\begin{pmatrix}
  0&M_{12}&M_{13}\\
  -M_{12}&0&M_{23}\\
  -M_{13}&-M_{23}&0
  \end{pmatrix},
\end{equation}
then $ \BB_{\rm eff}(M)=(M_{23},-M_{13},M_{12})^T, \, C(\BB_{\rm eff}(M))=M.$
For the orbit-generated magnetic matrix, this means
\begin{equation}
  \BB_{{\rm eff},i}(\AV):=\BB_{\rm eff}(\KA_i\AV),
  \qquad
  (\KA_i\AV)\vv=\vv\times \BB_{{\rm eff},i}(\AV).
  \label{eq:beff-KA}
\end{equation}
The sign convention is checked by the gauge $\AV=(0,B x,0)$, for which the skew orbit-gradient part reduces to $\KA_i\AV=(\grad\AV)^T-\grad\AV$ and gives $\BB_{{\rm eff},i}(\AV)=(0,0,B)^T$ and $(\KA_i\AV)\vv=\vv\times B \hat{\bm z}$.

\begin{definition}[HC map generated by the orbit skew matrix $\KA_i\AV$]
\label{def:hc-skew}
For the orbit-generated skew matrix $\KA_i\AV$, the HC mechanical-momentum map is
\begin{equation}
  \bR_{\rm HC}^p(\pp^-;\KA_i\AV)
  :=\bR_{\rm HC}^p\!\bigl(\pp^-;\BB_{{\rm eff},i}(\AV)\bigr).
  \label{eq:RHC-K-def}
\end{equation}
Equivalently, substitute $\BB=\BB_{{\rm eff},i}(\AV)$ into the explicit formula \cref{eq:RHC-one-line}.  In expanded form,
\begin{equation}
  \bR_{\rm HC}^p(\pp^-;\KA_i\AV)
  =m\kappaC\left[\uu^-+\left(\uu^-+\uu^-\times \bt_i^{\AV}\right)\times \bs_i^{\AV}\right],
  \label{eq:RHC-K-one-line}
\end{equation}
where we use Definition~\ref{def:hc-normalized} with
$\BB=\BB_{{\rm eff},i}(\AV)$. 
% \begin{equation}
% \begin{aligned}
%   \uu^- = \frac{\pp^-}{m\kappaC}, \,
%   \ttau_i^{\AV} =\frac{q\dt}{2m}\BB_{{\rm eff},i}(\AV), \, 
%   \bt_i^{\AV} =\frac{\ttau_i^{\AV}}{\gamma_{\rm HC}(\uu^-;\BB_{{\rm eff},i}(\AV))}, \, 
%   \bs_i^{\AV} =\frac{2\bt_i^{\AV}}{1+|\bt_i^{\AV}|^2}.
% \end{aligned}
%   \label{eq:RHC-K-data}
% \end{equation}
Thus $\bR_{\rm HC}^p(\pp^-;\KA_i\AV)$ is a completely specified algebraic map: build $\BB_{{\rm eff},i}(\AV)$ from $\KA_i\AV$, compute the HC scalar with that effective magnetic field, and evaluate \cref{eq:RHC-K-one-line}.
Equivalently, after $\gamma_{\rm HC}=\gamma_{\rm HC}(\pp^-/(m\kappaC);\BB_{{\rm eff},i}(\AV))$ has been computed from \cref{eq:gammaHC},
\begin{equation}
\begin{aligned}
  \bR_{\rm HC}^p(\pp^-;\KA_i\AV)&=\bR_i^{\AV}(\gamma_{\rm HC})\pp^-,\\
  \bR_i^{\AV}(\Gamma)&=
  \left(I-\frac{q\dt}{2m\Gamma}\KA_i\AV\right)^{-1}
  \left(I+\frac{q\dt}{2m\Gamma}\KA_i\AV\right).
\end{aligned}
  \label{eq:RK-def}
\end{equation}
The HC-centered velocity associated with this magnetic rotation is
\begin{equation}
  \VV_{\rm HC}(\pp^+,\pp^-;\KA_i\AV)
  =\frac{\pp^+ + \pp^-}{2m\,\gamma_{\rm HC}(\pp^-/(m\kappaC);\BB_{{\rm eff},i}(\AV))},
  \qquad \pp^+=\bR_{\rm HC}^p(\pp^-;\KA_i\AV).
  \label{eq:VHC-def}
\end{equation}
With these definitions the Cayley equation is equivalent to
\begin{equation}
  \pp^+ - \pp^- = q\dt\,(\KA_i\AV)\,\VV_{\rm HC}(\pp^+,\pp^-;\KA_i\AV).
  \label{eq:hc-magnetic-increment}
\end{equation}
\end{definition}

% In the rest of the paper, the shorter notation $R_{\rm HC}(\pp^-;\KA_i\AV)$ means the mechanical-momentum map $\bR_{\rm HC}^p(\pp^-;\KA_i\AV)$ in the nondimensional variables of \cref{sec:potential}.  Likewise $R_{\rm HC}(\uu^-;\BB)$, when used in a discussion of normalized variables, means the normalized map $\bR_{\rm HC}^u$ in \cref{eq:RHC-u-def}.  These are not additional undefined functions; they are abbreviations for the explicit formulas above.

\subsection{No-explicit-\texorpdfstring{$A_t$}{A-t} generalized-momentum HC step}

Let
\begin{equation}
  \AV_i^- = \Ah^n(\xx_i^n),\quad
  \AV_i^+=\Ah^{n+1}(\xx_i^{n+1}),\quad
  \overline{\AV}_{{\rm ep},i}=\frac{1}{2}(\AV_i^-+\AV_i^+),
\end{equation}
and gather the scalar-potential force with the orbit weights,
\begin{equation}
  \gphibar_i=\sum_g (\gradh\phi_g^{\half})\Sbar_{ig}.
\end{equation}
The canonical no-explicit-$\AV_t$ half transforms are
\begin{subequations}
\label{eq:gmhc-step}
\begin{align}
  \pp_i^- & = \PP_i^n-q_i\overline{\AV}_{{\rm ep},i}
          -\frac{q_i\dt}{2}\gphibar_i
          +\frac{q_i\dt}{2}(\DA_i\AV)\vbar_i,\label{eq:gmhc-pre}\\
  \pp_i^+ & = \bR_{\rm HC}^p(\pp_i^-;\KA_i\AV),\label{eq:gmhc-rot}\\
  \PP_i^{n+1} & = \pp_i^+ +q_i\overline{\AV}_{{\rm ep},i}
          -\frac{q_i\dt}{2}\gphibar_i
          +\frac{q_i\dt}{2}(\DA_i\AV)\vbar_i.\label{eq:gmhc-post}
\end{align}
\end{subequations}
Here $\bR_{\rm HC}^p(\pp^-;\KA_i\AV)$ is exactly the skew-matrix HC map in Definition~\ref{def:hc-skew}. 
% build $B_{{\rm eff},i}(\AV)=B_{\rm eff}(\KA_i\AV)$ from \cref{eq:beff-KA}, compute $\gamma_{\rm HC}$ from \cref{eq:gammaHC}, and apply the cross-product update \cref{eq:hc-explicit} or, equivalently, the Cayley matrix \cref{eq:RK-def}.  
The plus sign in front of $(\DA_i\AV)\vbar_i$ in both canonical half transforms is the discrete signature of the $\AV_t$ cancellation.  Indeed, by \cref{eq:A-chain-rule},
\begin{equation}
  -\frac{q_i\dt}{2}\Ubar_i
  = -\frac{q_i}{2}\left(\Ah^{n+1}(\xx_i^{n+1})-\Ah^n(\xx_i^n)\right)
    +\frac{q_i\dt}{2}(\DA_i\AV)\vbar_i.
\end{equation}
Thus \cref{eq:gmhc-step} is algebraically equivalent to mechanical electric half-kicks with $\Ebar_i=-\gphibar_i-\Ubar_i$, but the particle force never uses an explicit finite-difference approximation to $\AV_t$.

\subsection{Particle-volume theorem for prescribed fields}

The volume-preservation theorem concerns an auxiliary particle map with prescribed fields and frozen orbit data. A full determinant theorem for the fully coupled implicit PIC residual is a different and stronger claim that we leave to future work.

\begin{proposition}[Endpoint transformation]
Let $\Ah^n(\xx)$ be prescribed.  The map
\begin{equation}
  T^n(\xx,\PP)=(\xx,\pp),\qquad \pp=\PP-q\Ah^n(\xx),
\end{equation}
preserves $\dd\xx\dd\PP$.
\end{proposition}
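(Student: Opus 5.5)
The plan is to compute the Jacobian of $T^n$ directly and show that it is block lower triangular with identity diagonal blocks. Write $T^n(\xx,\PP)=(\xx,\PP-q\Ah^n(\xx))$. Because $\Ah^n(\xx)=\sum_g\AV_g^nS_g(\xx)$ is a prescribed function of $\xx$ alone (the mesh values $\AV_g^n$ are frozen and the shapes $S_g$ are fixed periodic splines), the map is a shear in the momentum variable parametrized by position. Its derivative is
\begin{equation*}
  DT^n(\xx,\PP)=
  \begin{pmatrix}
  I & 0\\
  -q\,\grad\Ah^n(\xx) & I
  \end{pmatrix},
\end{equation*}
where $\grad\Ah^n$ denotes the $d\times d$ spatial Jacobian of the interpolant. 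The determinant of a block triangular matrix is the product of the determinants of its diagonal blocks, so $\det DT^n\equiv 1$.

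To conclude preservation of $\dd\xx\dd\PP$, I will invoke the change-of-variables formula. I will also note that $T^n$ is a bijection of phase space, with explicit inverse $(\xx,\pp)\mapsto(\xx,\pp+q\Ah^n(\xx))$. Hence for any measurable set $\Omega$, the image $T^n(\Omega)$ has volume $\int_\Omega|\det DT^n|\,\dd\xx\dd\PP$, which equals the volume of $\Omega$.

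The only point I expect to need care is regularity. The B-spline shapes $S_g$ may be only piecewise smooth, with reduced smoothness at knots. For linear or higher-order splines they are Lipschitz, so $\grad\Ah^n$ exists almost everywhere and the change-of-variables formula for Lipschitz bijections still applies. For nearest-grid-point shapes I would alternatively avoid derivatives altogether: for each fixed $\xx$, the fiber map $\PP\mapsto\PP-q\Ah^n(\xx)$ is a translation. By Fubini, integrating translation-invariant Lebesgue measure in $\PP$ over each $\xx$-fiber gives measure preservation for any measurable $\Ah^n$. I will present this Fubini/translation argument as the main proof, since it needs no smoothness, and give the Jacobian computation as the intuitive check. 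The same argument applies verbatim to the endpoint map at level $n+1$ with $\Ah^{n+1}$.
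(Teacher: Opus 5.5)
Your proposal is correct. Your Jacobian computation is exactly the paper's entire proof: the paper writes $DT^n$ in block lower-triangular form with identity diagonal blocks, concludes that the determinant is one, and stops there. It implicitly treats $\Ah^n$ as differentiable and reads ``preserves $\dd\xx\dd\PP$'' as a unit-Jacobian change of variables.

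What is genuinely different is that you promote the fiberwise argument to the main proof. For each fixed $\xx$ the map is a translation of $\PP$ by $-q\Ah^n(\xx)$, so Fubini and translation invariance of Lebesgue measure give $|T^n(\Omega)|=\int|\Omega_{\xx}-q\Ah^n(\xx)|\dd\xx=|\Omega|$. The only detail to record is that $T^n(\Omega)$ is measurable. This holds because $T^n$ and your explicit inverse are Borel maps whenever $\Ah^n$ is Borel, which the spline interpolants are.

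This route buys generality. It needs no differentiability of the shapes, so it covers the $C^0$ linear splines in the paper's $r=1$ comparison, where $\grad\Ah^n$ jumps across knots, and even discontinuous shapes. It also makes bijectivity explicit, which a pointwise unit determinant alone does not provide.

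The paper's route buys brevity and matches how the proposition is used. Theorem~\ref{thm:fixed-volume} is stated as a Jacobian-determinant identity obtained by multiplying the determinants of the factors, so the block-triangular $DT^n$ plugs directly into that product. For the production $r=2$ method the quadratic B-splines are $C^1$, so the classical Jacobian exists everywhere and the regularity issue you raise does not arise there.
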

\begin{proof}
The Jacobian is block lower triangular:
\begin{equation}
  DT^n=\begin{pmatrix}
  I&0\\ -q\grad\Ah^n(\xx)&I
  \end{pmatrix}.
\end{equation}
The determinant is the product of the diagonal block determinants and is therefore one.
\end{proof}

\begin{theorem}[Fixed-field GM--HC particle volume]
\label{thm:fixed-volume}
Assume that $\Ah^n$, $\Ah^{n+1}$, $\gphibar_i$, $\DA_i\AV$, $\KA_i\AV$, and $\vbar_i$ are prescribed data for one step.  If the magnetic substep in \cref{eq:gmhc-rot} is the HC rotation generated by $\KA_i\AV$ as defined in Definition~\ref{def:hc-skew}, then the auxiliary one-particle GM–HC map with frozen orbit data preserves canonical particle volume:
\begin{equation}
  \det\frac{\partial(\xx_i^{n+1},\PP_i^{n+1})}{\partial(\xx_i^n,\PP_i^n)}=1.
\end{equation}
Consequently the product map preserves $\prod_i\dd \xx_i\dd \PP_i$ for independent particles in the same prescribed fields.
\end{theorem}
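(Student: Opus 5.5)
The plan is to write the frozen-data one-particle map as a composition of elementary maps on $(\xx,\PP)$ or $(\xx,\pp)$, each with unit Jacobian determinant, and then use multiplicativity of determinants. Freezing the orbit data means that $\vbar_i$, $\gphibar_i$, $\DA_i\AV$ and $\KA_i\AV$ are constants during the step. The position update is then the translation $\xx_i^{n+1}=\xx_i^n+\dt\,\vbar_i$, whose Jacobian is the identity. The endpoint potentials $\AV_i^-=\Ah^n(\xx_i^n)$ and $\AV_i^+=\Ah^{n+1}(\xx_i^n+\dt\,\vbar_i)$ are therefore smooth functions of $\xx_i^n$ alone. Hence $\overline{\AV}_{{\rm ep},i}$ is a function $\bm a(\xx_i^n)$ of position only. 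The remaining additive terms in \cref{eq:gmhc-pre,eq:gmhc-post} are constants.

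Accordingly, I factor the step into three maps.
\begin{itemize}
\item[(i)] The pre-kick $S_1(\xx,\PP)=(\xx,\ \PP-q\bm a(\xx)+\bm c)$, where $\bm c=-\tfrac{q\dt}{2}\gphibar_i+\tfrac{q\dt}{2}(\DA_i\AV)\vbar_i$. Its Jacobian is block lower triangular with identity diagonal blocks, exactly as in the endpoint transformation proposition, so its determinant is one.
\item[(ii)] The magnetic substep $S_2(\xx,\pp)=(\xx,\ \bR_{\rm HC}^p(\pp;\KA_i\AV))$. Since $\KA_i\AV$ is frozen, $\BB_{{\rm eff},i}(\AV)$ is a fixed vector and the map does not depend on $\xx$. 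Its Jacobian is block diagonal, $\mathrm{diag}(I,\ \partial\bR_{\rm HC}^p/\partial\pp)$.
\item[(iii)] The post-kick followed by the drift, $S_3(\xx,\pp)=(\xx+\dt\,\vbar_i,\ \pp+q\bm a(\xx)+\bm c)$. This is a shear composed with a translation, again of determinant one.
\end{itemize}
The GM--HC map is $S_3\circ S_2\circ S_1$, so its Jacobian determinant equals $\det(\partial\bR_{\rm HC}^p/\partial\pp)$. The scaling \cref{eq:RHC-p-def} $\pp\mapsto\uu=\pp/(m\kappaC)$ and its inverse cancel in the determinant, so this reduces to $\det(\partial\bR_{\rm HC}^u/\partial\uu^-)$ for the fixed field $\BB=\BB_{{\rm eff},i}(\AV)$.

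The main obstacle is this last determinant. The Cayley factor $Q(\BB,\Gamma)$ is orthogonal only for frozen $\Gamma$, while $\gamma_{\rm HC}$ depends on $\uu^-$. The paper explicitly defers this point to Higuera and Cary, so I would invoke their result that the complete nonlinear magnetic map \cref{eq:RHC-one-line} is volume preserving for constant $\BB$. Definition~\ref{def:hc-skew} makes $\bR_{\rm HC}^p(\cdot;\KA_i\AV)$ literally that map with $\BB$ replaced by $\BB_{{\rm eff},i}(\AV)$, so the result applies verbatim.

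If a self-contained check is wanted, I would verify it directly as follows.
\begin{itemize}
\item By \cref{eq:gammaHC}, $\gamma_{\rm HC}$ depends on $\uu^-$ only through $|\uu^-|^2$ and $(\uu^-\cdot\ttau)^2$. Both quantities are invariant under rotations about $\ttau$.
\item Decompose $\uu^-$ into components parallel and perpendicular to $\ttau$. The map then acts as the identity on the parallel component and as a planar rotation of the perpendicular component, with an angle depending only on $u_\parallel$ and $|u_\perp|$, since $|\uu^+|=|\uu^-|$.
\item In cylindrical coordinates $(u_\parallel,|u_\perp|,\theta)$ the map is $\theta\mapsto\theta+\alpha(u_\parallel,|u_\perp|)$ with the other two coordinates fixed. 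This is a shear with unit Jacobian.
\end{itemize}

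Finally, particles evolve independently in prescribed fields. The product map therefore has a block-diagonal Jacobian, with one block per particle. Its determinant is the product of the per-particle determinants, each equal to one, which gives preservation of $\prod_i\dd\xx_i\dd\PP_i$.
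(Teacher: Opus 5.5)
Your proposal is correct and follows the paper's proof. Both arguments treat the two canonical half transforms as unit-diagonal block-triangular momentum shears, note that the drift (a translation, since $\vbar_i$ is frozen) has unit determinant, and take the HC substep's unit determinant from Higuera--Cary. Your explicit observation that $\AV_i^+$ depends only on $\xx_i^n$ under frozen $\vbar_i$ makes precise what the paper leaves implicit, and your optional cylindrical-coordinate check (a rotation about $\ttau$ by an angle depending only on $(u_\parallel,|u_\perp|)$) is a valid self-contained substitute for the citation.
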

\begin{proof}
With frozen orbit data, \cref{eq:gmhc-pre,eq:gmhc-post} are momentum translations by functions of the endpoints and prescribed field data.  These translations are block triangular with unit diagonal blocks, just as in the endpoint transformation above.  The magnetic substep is an ordinary HC magnetic rotation with fixed effective magnetic field $B_{\rm eff}(\KA_i\AV)$ and therefore has determinant one.  The position drift written in the usual centered HC variables is a shear.  The determinant of the composition is the product of the determinants of these factors, hence one.
\end{proof}

\begin{remark}[What is not claimed]
In the fully coupled orbit-averaged PIC method, $\Sbar_{ig}$, $\DA_i\AV$, $\KA_i\AV$, the fields, and the accepted orbit all depend on the nonlinear solution.  A full-system volume theorem would require analyzing the residual map $R(z^{n+1},z^n)=0$ and proving $\det(-R_{z^{n+1}}^{-1}R_{z^n})=1$.  Theorem~\ref{thm:fixed-volume} is the fixed-field particle-volume result that transfers the HC structure into generalized momentum.  The energy theorem in \cref{sec:energy} is the fully coupled conservation statement established for the converged GM--HC--CN fixed point.
\end{remark}
\section{Nondimensional relativistic energy balance for the orbit-averaged GM--HC--CN fixed point}
\label{sec:energy}

The fixed-field theorem in \cref{sec:gmhc-volume} explains why the accepted particle push retains the HC volume-preserving magnetic rotation.  The fully coupled PIC theorem has a different target: exact conservation of the nondimensional relativistic total energy for the converged nonlinear step.    The key point is that the orbit average used by the particle push does not replace the HC magnetic rotation.  It supplies the current, gather, and vector-potential chain rule required by the same energy argument developed in Part I, now with the relativistic kinetic-energy secant velocity.

\subsection{Nondimensional relativistic kinetic-energy secant velocity}\label{sec:rel-vel}

For particle \(i\), define the nondimensional relativistic kinetic energy by
\begin{equation}
  \Kkin_i(\pp)=m_i\kappaC^2\bigl(\gamma_i(\pp)-1\bigr),
  \qquad
  \gamma_i(\pp)=\sqrt{1+\frac{|\pp|^2}{m_i^2\kappaC^2}}.
  \label{eq:Kkin-nondim}
\end{equation}
This is the particle kinetic energy associated with the Hamiltonian in \cref{eq:rel-Hamiltonian}.  For two mechanical momenta \(\pp_a,\pp_b\in\mathbb R^3\), set
\begin{equation}
  \VV_i^{\rm dg}(\pp_a,\pp_b)=
  \frac{\pp_a+\pp_b}{m_i\bigl(\gamma_i(\pp_a)+\gamma_i(\pp_b)\bigr)},
  \label{eq:Vdg}
\end{equation}
where we have used the superscript $\text{dg}$ to denote the discrete-gradient velocity associated with the relativistic kinetic energy between two momentum states $\pp_b$ and 	$\pp_a$.
Then
\begin{equation}
  \Kkin_i(\pp_a)-\Kkin_i(\pp_b)
  =\VV_i^{\rm dg}(\pp_a,\pp_b)\cdot(\pp_a-\pp_b).
  \label{eq:rel-sec}
\end{equation}
Indeed, \(\gamma_i(\pp_a)-\gamma_i(\pp_b)\) factors the difference of squares \(|\pp_a|^2-|\pp_b|^2\), and the factors of \(\kappaC\) cancel exactly as in \cref{eq:rel-particle-def}.  Thus \(V_i^{\rm dg}\) is a discrete-gradient, or secant, velocity for the nondimensional relativistic kinetic energy.  The orbit/current velocity is constructed from the two electric-work secant velocities as specified in \cref{eq:vbar-energy}.

\subsection{Mechanical split and magnetic zero work}\label{sec:Mechanical-split}

Let the endpoint mechanical momenta be
\begin{equation}
  \pp_i^n=\PP_i^n-q_i\Ah^n(\xx_i^n),
  \qquad
  \pp_i^{n+1}=\PP_i^{n+1}-q_i\Ah^{n+1}(\xx_i^{n+1}).
  \label{eq:mechanical-endpoints-energy}
\end{equation}
Define the midpoint mesh electric field and its orbit gather by
\begin{equation}
  \EE_g^{\half}=-(\gradh\phi^{\half})_g-\UU_g^{\half},
  \qquad
  \Ebar_i=\sum_g \EE_g^{\half}\Sbar_{ig}
  =-\gphibar_i-\Ubar_i,
  \label{eq:Ebar-energy}
\end{equation}
where
\begin{equation}
  \gphibar_i=\sum_g(\gradh\phi^{\half})_g\Sbar_{ig},
  \qquad
  \Ubar_i=\sum_g\UU_g^{\half}\Sbar_{ig}.
\end{equation}
Using the orbit-gradient split from \cref{eq:D-split}, the mechanical form equivalent to the canonical no-explicit-\(\partial_t\AV\) update is
\begin{subequations}
\label{eq:mechanical-split}
\begin{align}
  \pp_i^- &= \pp_i^n+\frac{q_i\dt}{2}\Ebar_i,
  \label{eq:ehalf1}\\
  \pp_i^+-\pp_i^- &= q_i\dt(\KA_i\AV)
  V_i^{\rm HC}(\pp_i^+,\pp_i^-;\KA_i\AV),
  \label{eq:magrot-energy}\\
  \pp_i^{n+1} &= \pp_i^+ +\frac{q_i\dt}{2}\Ebar_i.
  \label{eq:ehalf2}
\end{align}
\end{subequations}
Here \textcolor{black}{$\KA_i\AV$ is defined by \eqref{eq:D-split-skew},}
% \begin{equation}
%   \KA_i\AV:=(\DA_i\AV)^T-\DA_i\AV,
%   \qquad
%   (\KA_i\AV)^T=-(\KA_i\AV),
%   \label{eq:Ksplit-energy}
% \end{equation}
and the HC magnetic velocity is the species-\(i\) specialization of \cref{eq:VHC-def}.  Equivalently, define
\begin{equation}
  \uu_i^-:=\frac{\pp_i^-}{m_i\kappaC},
  \qquad
  \Gamma_i^{\AV}:=
  \gamma_{\rm HC}\!\left(\uu_i^-;\BB_{{\rm eff},i}(\AV)\right),
  \label{eq:GammaA-energy}
\end{equation}
where \(\gamma_{\rm HC}\) is the nondimensional HC scalar from \cref{eq:gammaHC}, evaluated with the species parameters \(q_i\), \(m_i\), and the same time step \(\dt\).  Then
\begin{equation}
  V_i^{\rm HC}(\pp_i^+,\pp_i^-;\KA_i\AV)
  =\frac{\pp_i^+ + \pp_i^-}{2m_i\Gamma_i^{\AV}},
  \qquad
  \pp_i^+=\bR_{\rm HC}^p(\pp_i^-;\KA_i\AV).
  \label{eq:VHC-energy-def}
\end{equation}
Thus the magnetic increment in \cref{eq:magrot-energy} is exactly the Cayley/HC relation \cref{eq:hc-magnetic-increment} generated by the orbit skew matrix \(\KA_i\AV\).  Since \(V_i^{\rm HC}\) is parallel to \(\pp_i^++\pp_i^-\) and \(\KA_i\AV\) is skew,
\begin{equation}
  (\pp_i^++\pp_i^-)\cdot
  (\KA_i\AV)V_i^{\rm HC}(\pp_i^+,\pp_i^-;\KA_i\AV)=0.
  \label{eq:magnetic-zero-work-energy}
\end{equation}
Taking the dot product of \cref{eq:magrot-energy} with \(\pp_i^++\pp_i^-\) therefore gives
\begin{equation}
  |\pp_i^+|^2-|\pp_i^-|^2=0,
  \qquad
  \Kkin_i(\pp_i^+)-\Kkin_i(\pp_i^-)=0.
  \label{eq:magnetic-kinetic-zero}
\end{equation}
The HC substep supplies the relativistic magnetic rotation, while the orbit skew matrix supplies the magnetic generator in potential form.

Define the electric-work velocities
\begin{equation}
  \vv_{i}^{E,-}=\VV_i^{\rm dg}(\pp_i^-,\pp_i^n),
  \qquad
  \vv_{i}^{E,+}=\VV_i^{\rm dg}(\pp_i^{n+1},\pp_i^+),
  \label{eq:electric-work-velocities}
\end{equation}
and the orbit/current velocity
\begin{equation}
  \vbar_i=\frac{1}{2}\left(\vv_i^{E,-}+\vv_i^{E,+}\right).
  \label{eq:vbar-energy}
\end{equation}
The accepted orbit is \(\xx_i^{n+1}=\xx_i^n+\dt\vbar_i\), and the current is deposited with this same velocity and the same orbit weights:
\begin{equation}
  \JJ_g^{\half}=\frac{1}{\dV}\sum_i q_i\vbar_i\Sbar_{ig}.
  \label{eq:orbit-current-energy}
\end{equation}

Using \cref{eq:rel-sec} on the two electric half-kicks, using \cref{eq:magnetic-kinetic-zero} for the HC magnetic rotation, and then using \cref{eq:vbar-energy}, we obtain the single-particle balance
\begin{equation}
  \Kkin_i(\pp_i^{n+1})-\Kkin_i(\pp_i^n)
  =q_i\dt\Ebar_i\cdot\vbar_i.
  \label{eq:single-particle-work}
\end{equation}
Summing over particles and applying the orbit-averaged scatter/gather identity \cref{eq:scatter-gather-work} with \(\bm{G}=\EE^{\half}\) gives
\begin{equation}
  \Kkin^{n+1}-\Kkin^n
  =\dt\sum_g \JJ_g^{\half}\cdot \EE_g^{\half}\dV,
  \label{eq:particle-work-balance}
\end{equation}
where $\Kkin^n=\sum_i m_i\kappaC^2\bigl(\gamma_i(\pp_i^n)-1\bigr).$
% \begin{equation}
  
%   \label{eq:total-kinetic-energy}
% \end{equation}

\subsection{CN field energy and total energy theorem}

The endpoint and midpoint mesh fields are
\begin{equation}
  \EE^n=-\gradh\phi^n-\UU^n,
  \qquad
  \BB^n=\curlh\AV^n,
  \qquad
  \EE^{\half}=-\gradh\phi^{\half}-\UU^{\half}.
  \label{eq:energy-fields}
\end{equation}
Using the nondimensional field energy $W^n$ defined in \cref{eq:field-energy-section2}, the CN potential solve \cref{eq:CN-field}, the midpoint Lorenz gauge \cref{eqn:gauge conditions}, the compatible coefficient scaling of \cref{eq:Maxwell_scaling}, and periodic summation by parts give the midpoint Maxwell energy balance
\begin{equation}
  \Wh^{n+1}-\Wh^n
  =-\dt\sum_g\JJ_g^{\half}\cdot \EE_g^{\half}\dV.
  \label{eq:field-work-balance}
\end{equation}
Combining \cref{eq:particle-work-balance} and \cref{eq:field-work-balance} gives the fully discrete result.

\begin{theorem}[Nondimensional relativistic GM--HC--CN energy conservation]
\label{thm:energy}
Assume periodic boundary conditions and summation-by-parts spatial operators. Assume also the midpoint Lorenz gauge \eqref{eqn:gauge conditions}
and the compatible coefficient scaling \eqref{eq:Maxwell_scaling}. Suppose the CN potential equations \cref{eq:CN-field}, the continuity update \cref{eq:cn-continuity}, the orbit current \cref{eq:orbit-current-energy}, the orbit weights, the vector-potential chain rule \cref{eq:A-chain-rule}, the skew magnetic matrix \eqref{eq:D-split-skew}, and the mechanical split \cref{eq:mechanical-split} together with the orbit velocity \cref{eq:vbar-energy} all hold at the accepted step.  If the coupled nonlinear system is solved to convergence, then
\begin{equation}
  \Etot^{n+1}=\Etot^n,
  \qquad
  \Etot^n=\Kkin^n+\Wh^n.
  \label{eq:total-energy-nondim}
\end{equation}
In floating-point arithmetic, the observed defect is limited by nonlinear solver tolerance, orbit-quadrature error, chain-rule residual, and roundoff.
\end{theorem}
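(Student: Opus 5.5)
The theorem follows by adding the two balances \cref{eq:particle-work-balance} and \cref{eq:field-work-balance}, so the plan is to establish each of them carefully at the converged step.

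\textbf{Particle side.} First I would check that the canonical step \cref{eq:gmhc-step} is equivalent to the mechanical split \cref{eq:mechanical-split}. Subtract $q_i\Ah^n(\xx_i^n)$ from \cref{eq:gmhc-pre} and $q_i\Ah^{n+1}(\xx_i^{n+1})$ from \cref{eq:gmhc-post}. Then use the chain rule of Lemma~\ref{lem:exact-chain-rule} to replace $\pm\tfrac{q_i}{2}(\AV_i^+-\AV_i^-)$ by $\pm\tfrac{q_i\dt}{2}(\Ubar_i+(\DA_i\AV)\vbar_i)$. Each half step should reduce to $\tfrac{q_i\dt}{2}\Ebar_i$ with $\Ebar_i=-\gphibar_i-\Ubar_i$. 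I expect the sign bookkeeping of the $(\DA_i\AV)\vbar_i$ terms here to be the main obstacle. It only closes if the accepted orbit velocity $\vbar_i$ in the chain rule is the same one used in \cref{eq:gmhc-step}, which is guaranteed at convergence.

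Next I apply \cref{eq:rel-sec} to the two electric half kicks. The secant velocities \cref{eq:electric-work-velocities} give
\[
\Kkin_i(\pp_i^-)-\Kkin_i(\pp_i^n)=\tfrac{q_i\dt}{2}\Ebar_i\cdot\vv_i^{E,-},\qquad
\Kkin_i(\pp_i^{n+1})-\Kkin_i(\pp_i^+)=\tfrac{q_i\dt}{2}\Ebar_i\cdot\vv_i^{E,+}.
\]
The magnetic substep contributes nothing by \cref{eq:magnetic-kinetic-zero}. That identity needs only the skewness of $\KA_i\AV$ and the fact that $V_i^{\rm HC}$ is parallel to $\pp_i^++\pp_i^-$; it does not depend on the specific value of $\gamma_{\rm HC}$. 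Summing the three pieces and using \cref{eq:vbar-energy} gives \cref{eq:single-particle-work}. Summing over $i$ and applying \cref{eq:scatter-gather-work} with $\bm G=\EE^{\half}$ gives \cref{eq:particle-work-balance}. That step is valid because the current \cref{eq:orbit-current-energy} and the gather use the same $\vbar_i$ and the same weights $\Sbar_{ig}$.

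\textbf{Field side.} From \cref{eq:CN-field}, the CN differences give
\[
\frac{\EE^{n+1}-\EE^n}{\dt}=-\gradh\psi^{\half}-\kappaC^2\Delta_h\AV^{\half}-\kappaC^2\sigma_2\JJ^{\half},\qquad
\frac{\BB^{n+1}-\BB^n}{\dt}=\curlh\UU^{\half}.
\]
I then rewrite the first expression as a discrete Ampère law.
\begin{itemize}
\item Substitute the midpoint Lorenz gauge $\psi^{\half}=-\kappaC^2\divgh\AV^{\half}$.
\item Use the identity $\curlh\curlh=\gradh\divgh-\Delta_h$, which holds for spectral operators.
\item This gives $(\EE^{n+1}-\EE^n)/\dt=\kappaC^2\curlh\BB^{\half}-\kappaC^2\sigma_2\JJ^{\half}$.
\item Likewise $\curlh\UU^{\half}=-\curlh\EE^{\half}$, since $\curlh\gradh=0$.
\end{itemize}
Take the inner product of the Ampère law with $\EE^{\half}/\sigma_1$ and of the Faraday law with $\BB^{\half}/\sigma_2$. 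Then use summation by parts, $\inner{\curlh\BB}{\EE}=\inner{\BB}{\curlh\EE}$. With $\sigma_1=\kappaC^2\sigma_2$, the curl terms cancel and the source term becomes $-\dt\inner{\JJ^{\half}}{\EE^{\half}}$, which is \cref{eq:field-work-balance}.

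Adding \cref{eq:particle-work-balance} and \cref{eq:field-work-balance} cancels the work term and yields $\Etot^{n+1}=\Etot^n$. Two caveats remain.
\begin{itemize}
\item The gauge condition is assumed by the theorem, as is propagation via \cref{eq:cn-continuity}.
\item Every identity used is algebraic at the fixed point. The remaining defects come only from nonlinear solver tolerance, inexact orbit quadrature in \cref{eq:orbit-average-shape,eq:D-def}, and roundoff, as the statement claims.
\end{itemize}
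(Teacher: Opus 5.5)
Your proposal is correct and follows the paper's proof. You obtain \eqref{eq:particle-work-balance} from the secant identity \eqref{eq:rel-sec} on the two electric half-kicks, HC magnetic zero work, and the scatter/gather adjointness \eqref{eq:scatter-gather-work}; you obtain \eqref{eq:field-work-balance} from the CN update, the midpoint Lorenz gauge, $\sigma_1=\kappaC^2\sigma_2$ and summation by parts; and you add the two. Your explicit discrete Amp\`ere/Faraday derivation fills in what the paper only asserts, and it rightly shows that the field balance also needs $\curlh\curlh=\gradh\divgh-\Delta_h$ and $\curlh\gradh=0$, which go beyond bare summation by parts but hold for the spectral operators used; your opening canonical-to-mechanical check is harmless but unnecessary, since the theorem assumes the mechanical split \eqref{eq:mechanical-split} directly.
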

\begin{proof}
Equation~\eqref{eq:particle-work-balance} is the particle kinetic-energy change, and \cref{eq:field-work-balance} is the negative field-energy change.  Adding them cancels the mesh work exactly.
\end{proof}

\begin{remark}[Role of orbit averaging]
The orbit average does not replace the HC magnetic rotation by an additive force.  It supplies the frozen data \(\Sbar_{ig}\), \(\DA_i\AV\), and \(\KA_i\AV\) used in the rotation and in the chain rule.  For fixed fields and frozen orbit data, the HC volume-preservation argument of \cref{thm:fixed-volume} still applies.  For the coupled energy theorem, the essential requirement is different: the same \(\vbar_i\) and the same \(\Sbar_{ig}\) must define the orbit, the current deposit, the electric-field gather, and the particle work.
\end{remark}

\subsection{Diagnostics required by the proof}

The implementation reports the total relative energy error and the mesh RMS
norms of the Gauss and Lorenz-gauge residuals
\begin{equation}
  \begin{aligned}
  R_G^n&=\divgh\EE^n-\sigma_1\rho^n,
  \qquad R_L^n=\frac{\psi^n}{\kappaC^2}+\divgh\AV^n,\\
  {\|R_\alpha^n\|_{\rm rms}}
  &={\left(\frac{1}{N_g}\sum_g
  \left|(R_\alpha^n)_g\right|^2\right)^{1/2},
  \qquad \alpha\in\{G,L\}.}
  \end{aligned}
  \label{eq:constraint-residuals}
\end{equation}
where \(N_g\) is the number of mesh points. The implementation also monitors the orbit chain-rule residual
\begin{equation}
  \begin{aligned}
  \bR_{A,i}
  &=\Ah^{n+1}(\xx_i^{n+1})-\Ah^n(\xx_i^n)
  -\dt\left(\Ubar_i+(\DA_i\AV)\vbar_i\right),\\
  \|\bR_A\|_{\rm rms}
  &=\left(
  \frac{1}{3N_p}\sum_{i=1}^{N_p}\|\bR_{A,i}\|_2^2
  \right)^{1/2}.
  \end{aligned}
  \label{eq:RA-diagnostic}
\end{equation}
Finally, it records the per-step particle--grid work residual
\begin{equation}
  R_{\rm work}=\dt\left[
  \sum_i q_i\vbar_i\cdot\Ebar_i
  -\sum_g \JJ_g^{\half}\cdot \EE_g^{\half}\dV
  \right].
  \label{eq:work-diagnostic}
\end{equation}
If the orbit chain-rule residual \(\|\bR_A\|_{\rm rms}\) is not small, the orbit splitting, quadrature, and discrete-gradient construction should be checked.  If \(R_{\rm work}\) is not small, scatter and gather are not adjoint.  If both are small but total energy drifts, the CN field-energy identity, coefficient scaling, gauge relation, or nonlinear convergence must be checked.

\subsection{Implicit energy-conserving PIC solver}
\label{subsec:adaptive_particle_step}
The fully implicit PIC solver presented in this work builds on the nonrelativistic method in \cite[Algorithm~4.1]{ChristliebChaconGong2026EC} with the following four changes. First, the GM--HC map defined in
\cref{sec:rel-vel,sec:Mechanical-split} replaces the nonrelativistic
particle update. Second, the particle-shape degree is raised from \(r=1\) to
\(r=2\). Third, an orthogonal Fourier projector restricts the fields and source current to the subspace excluding even-grid Nyquist planes \cite{GottliebOrszag1977}. Fourth, the coupled nonlinear system is solved by an outer Anderson-accelerated current iteration with inner Picard particle iterations.
The underlying orbit construction, knot-split quadrature, continuity update, CN potential solve, and adjoint scatter–gather structure are retained. The CN potential equations are solved directly, mode by mode in Fourier space, on the periodic spectral mesh. This linear field update is included in each evaluation of the coupled nonlinear map described below.

We first describe the change in particle shape. For
\(\xi=(x-x_g)/\Delta x\), the centered one-dimensional quadratic
B-spline used here is
\begin{equation*}
S_2(\xi)
=\frac{1}{2}\left[
\left[\left(\frac{3}{2}-|\xi|\right)_+\right]^2
-3\left[\left(\frac{1}{2}-|\xi|\right)_+\right]^2
\right],
\qquad x_+ := \max(x,0).
\end{equation*}
In three dimensions the particle shape is the tensor product
\(S_g(\xx)=\prod_{\ell=1}^3
S_2((x_\ell-x_{g,\ell})/\Delta x_\ell)\). It is therefore piecewise quadratic
in each coordinate, rather than one total-degree-two polynomial in three
variables, and has support on at most \(3^3=27\) mesh points. Its \(C^1\)
regularity satisfies the requirement of the potential-based magnetic generator.
Degree two is a sufficient choice rather than a minimality claim;
\cref{sec:results} gives numerical evidence.

\paragraph{Nyquist projection}
Let $(N_1,N_2,N_3)=(N_x,N_y,N_z)$ denote the numbers of mesh points in the three coordinate directions, so that $N_g=\prod_{\ell=1}^3N_\ell=N_xN_yN_z$ is the total number of mesh points and let $\mathcal P$ be the orthogonal Fourier projector that removes every mode lying on any even-grid Nyquist plane. Here $\boldsymbol k$ denotes the unshifted integer FFT index, with $k_\ell\in\{0,\ldots,N_\ell-1\}$ for $\ell\in\{1,2,3\}$:
\begin{equation}
 \widehat{\mathcal P f}_{\boldsymbol k}=
 \begin{cases}
 0,&k_\ell=N_\ell/2\text{ for at least one even }N_\ell,\\
 \widehat f_{\boldsymbol k},&\text{otherwise}.
 \end{cases}
 \label{eq:nyquist-projector}
\end{equation}
The projection is applied componentwise to vector fields. It restricts the field space while leaving all retained Fourier coefficients unchanged. Initial charge, initial current, the source current at every nonlinear evaluation, potentials, and potential time derivatives are projected. Canonical particle momenta are reconstructed from the prescribed mechanical velocities and projected vector potential. The continuity and CN updates remain in the retained field space.

The projection addresses a distinction between the mesh derivative and the derivative of the particle interpolant at the Nyquist frequency. In our implementation, the first-derivative symbol vanishes at the even-grid Nyquist index, and the mesh Laplacian is constructed from these same symbols. Thus the mesh operators remain mutually compatible even without projection. However, a Nyquist component of the vector potential can have a nonzero derivative after spline interpolation to particle positions.

For example, consider $A_{x,g}=a(-1)^{j_y}$, where $j_y$ is the mesh index in the $y$ direction, with all other components zero and no dependence on $x$ or $z$. Its represented mesh curl vanishes, whereas $\partial_y A_{h,x}$ generally does not vanish between mesh nodes. Consequently, this component can contribute to the orbit skew matrix $\KA_i\AV$ and rotate particle momentum despite contributing no mesh magnetic field. Such a rotation performs no particle work, so energy conservation alone does not exclude this effect.

Removing the Nyquist planes eliminates these potential components from both the field solve and the particle interpolant. The restriction therefore removes this particular source of disagreement between the mesh and particle magnetic representations. It does not make spline differentiation identical to gathered spectral differentiation for all retained modes.

The projected current is $\bm J=\mathcal P\bm J_{\rm orb}$. Since $\mathcal P$ is self-adjoint and the mesh electric field satisfies \(\mathcal P\mathbf E=\mathbf E\),
\begin{equation}
 \langle\mathcal P\bm J_{\rm orb},\EE\rangle_h
 =\langle\bm J_{\rm orb},\mathcal P\EE\rangle_h
 =\langle\bm J_{\rm orb},\EE\rangle_h.
 \label{eq:projected-work-identity}
\end{equation}
Thus the deposit--gather work cancellation holds on the retained field space. The orbit chain rule uses the same projected interpolant at its endpoints and along the orbit. The projector commutes with the spectral derivatives, so the continuity and field-energy arguments apply in this subspace. This is a restriction of the coupled discretization, rather than a correction to particle velocities after a time step.

\paragraph{\texorpdfstring{Current-based formulation with field and particle elimination}{Current-based formulation with field and particle elimination}}
The nonlinear iteration is formulated entirely in terms of the mesh current. As summarized in \cref{alg:anderson-orbit-cn}, each residual evaluation first projects the trial current, advances charge through continuity, and solves the linear CN potential equations directly in Fourier space. With these trial fields fixed, inner Picard iterations determine the particle orbits. Deposition and projection of the resulting orbit current define the map $H(\bm j)$, and the outer iteration enforces $H(\bm j)-\bm j=0$. Both the field variables and the particle orbit variables are therefore eliminated from the outer nonlinear system. The direct Fourier solve incorporates the linear electromagnetic response, including the light-wave dynamics, within each residual evaluation. The remaining current consistency problem contains the coupled plasma response; no additional physics-based preconditioner for this reduced problem is used here.

Let $\bm j=\operatorname{vec}
    \left(\{\bm J_g^{n+1/2,\mathrm{trial}}\}_{g=1}^{N_g}\right) \in\mathbb R^{3N_g}$ denote the stacked trial
mesh-current vector.
The field update and converged inner particle
solves determine the orbit current $\bm J_{\rm orb}(\bm j)$.
Define $H(\bm j)=\operatorname{vec}
    \left(\mathcal P\bm J_{\rm orb}(\bm j)\right).$
The outer problem is $H(\bm j)=\bm j$, with residual
$\bm r=H(\bm j)-\bm j$ and mesh RMS norm 
\[
  \|\bm r\|_{\rm rms}
    =\left(\frac{1}{3N_g}\sum_{g=1}^{N_g}|\bm r_g|^2\right)^{1/2}.
\] 
% \[
%   \|\bm r\|_{\rm rms}
%     =\left(1/(3N_g)\sum_{g=1}^{N_g}|\bm r_g|^2\right)^{1/2}.
% \] 
The same RMS norm is used for the input and output
mesh-current vectors in the convergence criteria below. For the same discrete equations, projection, and orbit quadrature, convergence of the inner and outer iterations recovers a solution of the coupled particle–field system. Particle enslavement changes the nonlinear unknowns, rather than the discrete equations.
The particle trajectories are functions of the trial current through these inner solves; they are not entries of the outer Anderson vector.
The outer nonlinear vector contains $3N_g$ mesh-current components,
rather than $3N_p$ particle orbit-velocity components. For Anderson
acceleration with the same history length $M$, this reduces the
history storage from $O(MN_p)$ to $O(MN_g)$. Particle states and
orbit data still require $O(N_p)$ storage. For example, on the $32^3$ meshes used below, the outer vector contains $98{,}304$ components, compared with $1{,}966{,}080$ particle-velocity components for TSI and $4{,}915{,}200$ for WBI. This addresses the nonlinear-vector memory concern without claiming that total solver memory is independent of particle number. The inner iterations converge particle orbits over a fixed field step; the four main projected runs below use one particle time interval per field step. Particle enslavement also provides a basis for future large-scale parallel implementations with particle subcycling, with field macro time-step fixed and particle time-step refinement and consistently accumulated orbit-averaged currents, following \cite{ChenChaconBarnes2011,ChenChacon2015}. Large-scale parallel implementation and accuracy-controlled particle subcycling are beyond the present study.

The outer acceptance tests are
% \begin{subequations}\label{eq:mixed-nonlinear-tests}
% \begin{align}
%  \frac{\|\bm r\|_{\rm rms}}
%  {\tau_{\rm abs}+\frac{\tau_{\rm rel}}{2}
%  (\|H(\bm j)\|_{\rm rms}+\|\bm j\|_{\rm rms})}&\leq1,
%  \label{eq:mixed-nonlinear-rms}\\
%  \max_g\frac{|\bm r_g|}
%  {\tau_{\rm abs}+\frac{\tau_{\rm rel}}{2}
%  (|H_g(\bm j)|+|\bm j_g|)}&\leq1.
%  \label{eq:mixed-nonlinear-max}
% \end{align}
% \end{subequations}
\begin{align}
\label{eq:mixed-nonlinear-tests}
 \frac{\|\bm r\|_{\rm rms}}
 {\tau_{\rm abs}+\frac{\tau_{\rm rel}}{2}
 (\|H(\bm j)\|_{\rm rms}+\|\bm j\|_{\rm rms})} \leq1, \quad 
 \max_g\frac{|\bm r_g|}
 {\tau_{\rm abs}+\frac{\tau_{\rm rel}}{2}
 (|H_g(\bm j)|+|\bm j_g|)} \leq1.
\end{align}
The maximum is over mesh nodes. Independently, each inner particle solve uses a mixed absolute--relative test on its orbit-velocity residual and requires finite, subluminal velocities. The inner solver is damped Picard; the projected production runs use unit inner damping. Repeated inner iterations converge an orbit over the same full time interval and do not constitute time subcycling.

The outer solver uses limited-memory type-II Anderson acceleration. With differences $\Delta\bm j$ and $\Delta\bm r$ stored as columns of $\Delta\mathcal J$ and $\Delta\mathcal R\in\mathbb R^{3N_g\times m_h}$, $m_h\leq M$, it computes
\begin{equation}
 \left[\Delta\mathcal R^T\Delta\mathcal R+
 \lambda\frac{\operatorname{tr}(\Delta\mathcal R^T\Delta\mathcal R)}{m_h}I\right]\bm\gamma
 =\Delta\mathcal R^T\bm r, \qquad
 \bm j_{\rm A}=\bm j+\beta\bm r-
 (\Delta\mathcal J+\beta\Delta\mathcal R)\bm\gamma.
 \label{eq:anderson-type-two-update}
\end{equation}
The history is cleared when unusable or when the relative RMS residual grows by more than a factor $1.2$. Backtracking tests $\alpha=1,1/2,\ldots,1/64$ along the proposed direction. An accepted trial must reduce the scaled RMS residual by a factor at most $1-10^{-4}\alpha$ and keep the scaled maximum-node residual within a factor $1.2$ of its current value. A rejected Anderson direction triggers a damped-Picard attempt. Failure leaves the accepted state unchanged.

\begin{algorithm}[!htbp]
\caption{Current-based GM--HC--CN solve with field and particle elimination and Nyquist projection}
\label{alg:anderson-orbit-cn}
\footnotesize
\begin{algorithmic}[1]
\Require Accepted particle--field state, fixed $\dt$, outer and inner tolerances, and iteration limits.
\Ensure A converged time-$n+1$ state, or failure with the accepted state unchanged.
\State Initialize the trial current from the accepted current and clear the outer history.
\For{each outer iteration}
 \State Project the trial current, advance charge by continuity, and solve the CN potential equations.
 \State Hold the trial fields fixed and converge every particle orbit by inner Picard iteration over the full $\dt$.
 \If{any inner particle solve fails}
  \State Return a failed-step flag.
 \EndIf
 \State Deposit and project the orbit current to obtain $H(\bm j)$; form $\bm r=H(\bm j)-\bm j$.
 \If{both outer tests in \cref{eq:mixed-nonlinear-tests} pass}
  \State Re-evaluate the full map at the candidate current and verify both outer tests and all inner solves.
  \State Commit the corresponding particle, source, and field state if verified; otherwise return failure.
  \State \Return
 \EndIf
 \State Form a safeguarded Anderson update, or a damped-Picard update if the history is unusable.
 \State Backtrack using the RMS-decrease and maximum-node safeguards; retry a rejected Anderson direction with damped Picard.
 \If{neither direction is accepted}
  \State Return a failed-step flag.
 \EndIf
\EndFor
\State Return failure if the outer iteration limit is reached.
\end{algorithmic}
\end{algorithm}

\section{Numerical results}
\label{sec:results}
\label{sec:projected-results}
The numerical implementation is written in C++17 and parallelized with shared-memory OpenMP. We consider two representative examples:
the cold relativistic two-stream instability (TSI) and the cold
relativistic Weibel instability (WBI). 
The source code and input files used for the numerical experiments are publicly available in the GitHub repository~\cite{Gong2026UnstaggeredPIC}. 
All simulations use the
nondimensional normalization
$
  \kappa=1,\,
  \sigma_1=\sigma_2=1,\,
  \rho_0=1,
$
with electron macroparticles satisfying \(q/m=-1\) and a uniform,
immobile neutralizing background. This choice satisfies
\(\sigma_1=\kappa^2\sigma_2\) and gives the cold electron plasma frequency
\(\omega_p^2=\sigma_1\rho_0=1\). Each macroparticle has
\(m_i=|q_i|\), and all particle speeds satisfy \(|\vv_i|<\kappa\).

The four main production runs use \(v_0=0.9\), hence
\(\gamma_0=(1-v_0^2/\kappa^2)^{-1/2}=2.2941573387\), a \(32^3\) mesh, fixed
time steps, quadratic tensor-product B-splines (\(r=2\)), spectral fields and Nyquist projection. Gauss--Legendre quadrature of order eight is applied separately on every knot-split
orbit interval. \textcolor{black}{The absolute outer tolerance is
\(\tau_{\rm abs}=10^{-13}\), and the relative outer tolerance is \(\tau_{\rm rel}=10^{-11}\). The inner limit is 64 iterations per particle solve, and the outer limit is 80 iterations per field step.} The Anderson solver uses memory \(M=4\) and becomes eligible at iteration \(m_{\rm A}=2\); it employs damping \(\beta=0.5\), regularization \(\lambda=10^{-12}\). Its line search tests
\(\alpha=1,1/2,\ldots,1/64\), with sufficient-decrease coefficient
\(10^{-4}\) and residual-growth limit \(1.2\). Each particle pair
comprises two electron macroparticles with symmetry-related initial
velocities. The parameters that differ among the four simulations are summarized
in \Cref{tab:simulation-parameters}. For each instability, we compare a smaller and a larger fixed time step to assess sensitivity of the reported diagnostics to the field time step. The selected values give an integer number of steps over each simulation interval. For the axis-aligned modes, the B-spline Fourier shape factor and finite-grid plasma frequency are
\begin{equation}
 \chi_r(k)=\left[\operatorname{sinc}\left(\frac{k\Delta x}{2}\right)\right]^{r+1},
 \qquad \operatorname{sinc}(z)=\frac{\sin z}{z},
 \qquad \omega_{p,h}^2=\sigma_1\rho_0|\chi_r(k)|^2.
 \label{eq:shape-factor-results}
\end{equation}
The growth-rate reference includes this spatial shape factor. Plotted mode amplitudes are $|\widehat f_{\boldsymbol k}|/N_g$, where $\widehat f$ is the unnormalized discrete Fourier transform of the mesh field. Thus a sinusoid of physical amplitude $B_0$ has positive-mode amplitude $B_0/2$. 
We monitor the unstable-mode
amplitude, relative total-energy change, Gauss and Lorenz residuals, orbit
chain-rule residual \(R_A\) from \cref{eq:RA-diagnostic}, and deposit--gather
work residual \(R_{\rm work}\) from \cref{eq:work-diagnostic}. Growth rates are obtained by least-squares fits of log mode amplitude against time over \(0\leq t\leq2\) for TSI and \(0\leq t\leq1.6\) for WBI. Every step in the four runs in \Cref{tab:simulation-parameters} passed the final mixed residual tests.
\begin{table}[!htbp]
\color{black}\centering\small
\caption{Reproducibility parameters for the four main simulations. Common Anderson and initialization settings are stated in the text.}
\label{tab:simulation-parameters}
\setlength{\tabcolsep}{4pt}
\renewcommand{\arraystretch}{1.10}
\begin{tabular}{lrrrr}
\hline
Parameter & TSI-0.1 & TSI-0.3 & WBI-0.04 & WBI-0.2 \\
\hline
Domain & $[-4\pi,4\pi]^3$ & $[-4\pi,4\pi]^3$ & $[-\pi,\pi]^3$ & $[-\pi,\pi]^3$ \\
Pairs per cell & 10 & 10 & 25 & 25 \\
Unstable wave number & $k_x=1/4$ & $k_x=1/4$ & $k_y=1$ & $k_y=1$ \\
$\Delta t$ & 0.1 & 0.3 & 0.04 & 0.2 \\
Analysis end time & 60 & 60 & 100 & 100 \\
Inner relative tolerance & $10^{-11}$ & $10^{-11}$ & $10^{-12}$ & $10^{-12}$ \\
Inner absolute tolerance & $10^{-13}$ & $10^{-13}$ & $10^{-14}$ & $10^{-14}$ \\
\hline
\end{tabular}
\end{table}

\subsection{Cold relativistic two-stream instability with
\texorpdfstring{\(v_0=0.9\)}{v0 = 0.9}}
\label{sec:two-stream-results}
The TSI calculation is three dimensional, although its longitudinal growing
mode depends only on \(x\). The mobile charge consists of two equal cold
electron streams, each with unperturbed density \(\rho_0/2\), drifting at
\(\pm v_0\hat x\). The listed domain and mesh give
\(\Delta x=\pi/4\) and \(k_x\Delta x/2=\pi/32\).
The long box resolves the relativistically narrowed unstable band: the
longitudinal response scales with \(\gamma_0^{-3}\), and
a box of side length \(2\pi\), whose fundamental mode has
\(k_x=1\), would be stable for these parameters. 

The initial condition is the growing cold-fluid eigenmode rather than a
density-only perturbation. Its grid charge-density and electric-field
amplitudes satisfy
\begin{equation}
  \rho(x,0)=\alpha_{\rm TS}\rho_0\cos(k_xx),
  \;\;
  E_x(x,0)=\frac{\sigma_1\alpha_{\rm TS}\rho_0}{k_x}\sin(k_xx),
  \;\;
  \alpha_{\rm TS}=5\times10^{-3}.
  \label{eq:two-stream-initial-fields}
\end{equation}
The particle velocities and weights are initialized from the corresponding
finite-grid eigenvector using the gathered field amplitude
\(\chi_2(k_x)E_x\). This suppresses the startup transient associated with a
single-component seed and makes \(|E_x(k_x,t)|\) a clean linear diagnostic.
For two equal cold relativistic beams, the longitudinal dispersion
relation is
\cite{KrallTrivelpiece1973,Stix1992,BretGremilletDieckmann2010}
\begin{equation}
  1-\frac{\omega_{p,h}^2}{2\gamma_0^3}
  \left[
    \frac{1}{(\omega-k_xv_0)^2}
    +\frac{1}{(\omega+k_xv_0)^2}
  \right]=0.
  \label{eq:rel-ts-cont}
\end{equation}
With \(a=\omega_{p,h}^2/(2\gamma_0^3)\) and \(b=k_x^2v_0^2\), the unstable
branch is \(\omega=\ii\Gamma_{\rm TS}\), where
\begin{equation}
  \Gamma_{\rm TS}
  =
  \left[
    \sqrt{a^2+4ab}-a-b
  \right]^{1/2}.
  \label{eq:gamma-ts}
\end{equation}
Equation~\eqref{eq:shape-factor-results} gives
\(|\chi_2(k_x)|^2=0.9904049451\) and
\(\Gamma_{\rm TS}=9.109138040\times10^{-2}\). 

The fitted rates are $0.09101901$ and $0.09095955$ for $\Delta t=0.1$ and $0.3$, respectively, with relative errors $7.95\times10^{-4}$ and $1.45\times10^{-3}$ against the finite-grid reference. Both mode histories initially follow exponential growth and subsequently enter nonlinear trapping. Their sampled first large maxima occur near $t=42.6$, with amplitudes $0.25296$ and $0.25141$. At $T=60$, the amplitudes are $0.0405932$ and $0.0400176$, a difference of about $1.42\%$ relative to the smaller-step run. These comparisons show close agreement for these particular mode diagnostics.

The maximum relative total-energy errors are $3.53\times10^{-12}$ and $6.17\times10^{-12}$. The Gauss, Lorenz, chain-rule, and particle--grid work diagnostics remain small in both runs, as shown in \cref{fig:tsi-residuals}. \Cref{fig:diagnostic-comparison} compares energy drift, structural residuals, and mode amplitudes for the two instabilities. \Cref{fig:phase-tsi-03} shows the $\Delta t=0.3$ TSI $x$--$v_x$ portrait at $T=60$, using 20,000 particles selected by a fixed sampling seed. For $\Delta t=0.1$ and $0.3$, respectively, the all-particle, $|q|$-weighted longitudinal RMS velocities are $0.836845$ and $0.836425$, differing by approximately $0.050\%$. This moment comparison supplements the displayed phase portrait but does not measure all fine-scale phase-space differences. The corresponding transverse RMS velocities are $2.29\times10^{-12}$ and $4.60\times10^{-12}$; their behavior is discussed in the transverse-motion comparison below.

\subsection{Cold relativistic Weibel/filamentation instability}
\label{subsec:weibel_filamentation}
The WBI calculation consists of two equal cold electron beams streaming at
\(V_s=s v_0\hat x\), \(s=\pm1\), with a perturbation varying in \(y\).
The unstable mode has magnetic component \(B_z\) and induced current
\(J_x\), corresponding to the cold symmetric filamentation limit
\cite{Weibel1959,BretGremilletDieckmann2010}. The domain and numerical parameters are specified in \cref{tab:simulation-parameters}. The listed domain and mesh give \(\Delta y=\pi/16\) and
\(|\chi_2(k_y)|^2=0.9904049451\).

The initialization contains the complete growing eigenmode rather than a
magnetic seed alone. With \(B_0=10^{-4}\), its mesh fields are
\begin{subequations}
\label{eq:weibel-initial-fields}
\begin{align}
  A_x(y,0)&=\frac{B_0}{k_y}\cos(k_yy),
  &B_z(y,0)&=-\partial_yA_x=B_0\sin(k_yy),\\
  U_x(y,0)&=\Gamma_W A_x(y,0),
  &E_x(y,0)&=-U_x(y,0).
\end{align}
\end{subequations}
For stream \(s=\pm1\), the matching particle velocity and relative weight are
\begin{subequations}
\label{eq:weibel-initial-particles}
\begin{align}
  v_{x,s}(y,0)
  &=s v_0+\frac{\chi_2(k_y)B_0}{k_y\gamma_0^3}\cos(k_yy),\\
  v_{y,s}(y,0)
  &=s\frac{\chi_2(k_y)v_0B_0}{\gamma_0\Gamma_W}\sin(k_yy),\\
  w_s(y)
  &=1-s\frac{\chi_2(k_y)v_0k_yB_0}
  {\gamma_0\Gamma_W^2}\cos(k_yy).
\end{align}
\end{subequations}
The opposite weight perturbations cancel in the total charge to linear order,
while the current and field time derivative select the growing transverse
electromagnetic branch. The seed \(B_0\) sets the initial mode amplitude but
does not enter the linear growth rate.
Linearization gives
\begin{equation}
  \omega^4-
  \left(
    \kappa^2k_y^2+\frac{\omega_{p,h}^2}{\gamma_0^3}
  \right)\omega^2
  -\frac{k_y^2\omega_{p,h}^2v_0^2}{\gamma_0}=0.
  \label{eq:weibel-dispersion-updated}
\end{equation}
For \(\omega=\ii\Gamma_W\), we have 
$
  \Gamma_W^2
  =
  \frac{1}{2}
  \left[
    -A+\sqrt{A^2+4C}
  \right],
  \quad
  A=\kappa^2k_y^2+\frac{\omega_{p,h}^2}{\gamma_0^3},
  \quad
  C=\frac{k_y^2\omega_{p,h}^2v_0^2}{\gamma_0}.
$
Here the \(\gamma_0^{-3}\) term represents the longitudinal electric response
along the beams, whereas the destabilizing magnetic-bunching term scales as
\(\gamma_0^{-1}\).
For the reported parameters, \(A=1.0820244263\), \(C=0.3496830806\), and
\(\Gamma_W=5.103647904\times10^{-1}\). 

The fitted rates are
\(5.10911239\times10^{-1}\) for WBI-0.04 and
\(5.11347526\times10^{-1}\) for WBI-0.2, with relative errors
\(1.07\times10^{-3}\) and \(1.93\times10^{-3}\), respectively.
Both runs follow the finite-grid prediction during the fitted linear
interval and exhibit similar initial saturation followed by bounded
oscillations over the simulated interval, although differences develop
in the later nonlinear evolution, as shown in
\cref{fig:mode-comparison-wbi}.

For WBI-0.04 and WBI-0.2, respectively, the maximum Gauss RMS residuals
through \(T=100\) are \(1.22\times10^{-12}\) and
\(5.12\times10^{-13}\), the maximum Lorenz RMS residuals are
\(1.40\times10^{-13}\) and \(3.95\times10^{-14}\), and the maximum
absolute relative total-energy changes are \(3.74\times10^{-14}\) and
\(4.13\times10^{-13}\), as shown in
\cref{fig:energy-comparison-wbi,fig:wbi-residuals}.
The corresponding maximum chain-rule RMS residuals are
\(6.68\times10^{-16}\) and \(5.57\times10^{-16}\); the maximum absolute
deposit--gather work residuals are \(1.44\times10^{-14}\) and
\(1.31\times10^{-13}\), respectively.

\begin{figure}[!htbp]
\centering
\captionsetup[subfigure]{font=footnotesize,skip=2pt,labelfont={color=black},justification=centering}
\begin{subfigure}[t]{0.49\linewidth}
\centering
\includegraphics[width=\linewidth]{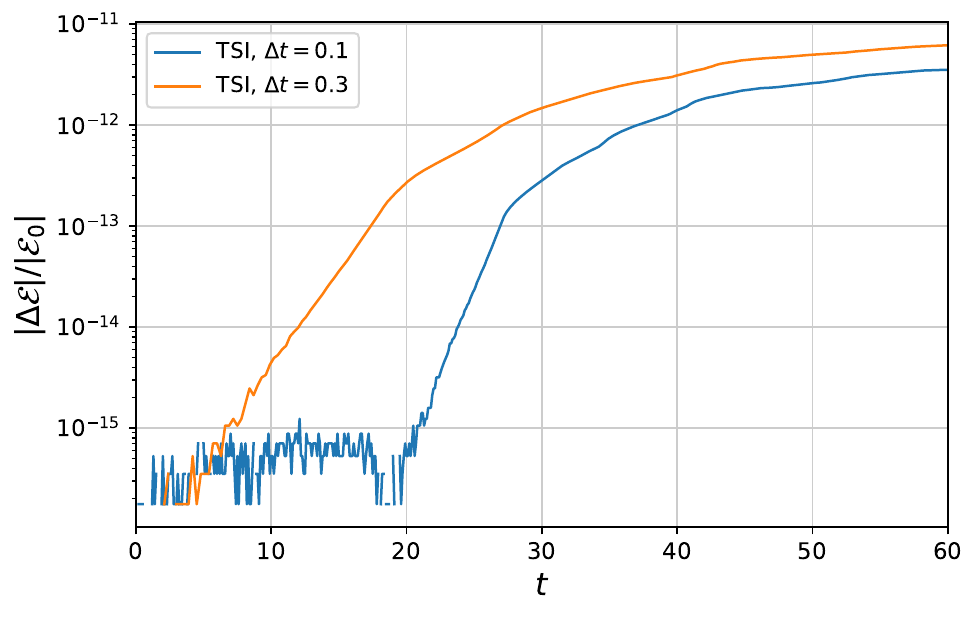}
\caption{TSI energy drift.}
\label{fig:energy-comparison-tsi}
\end{subfigure}\hfill
\begin{subfigure}[t]{0.49\linewidth}
\centering
\includegraphics[width=\linewidth]{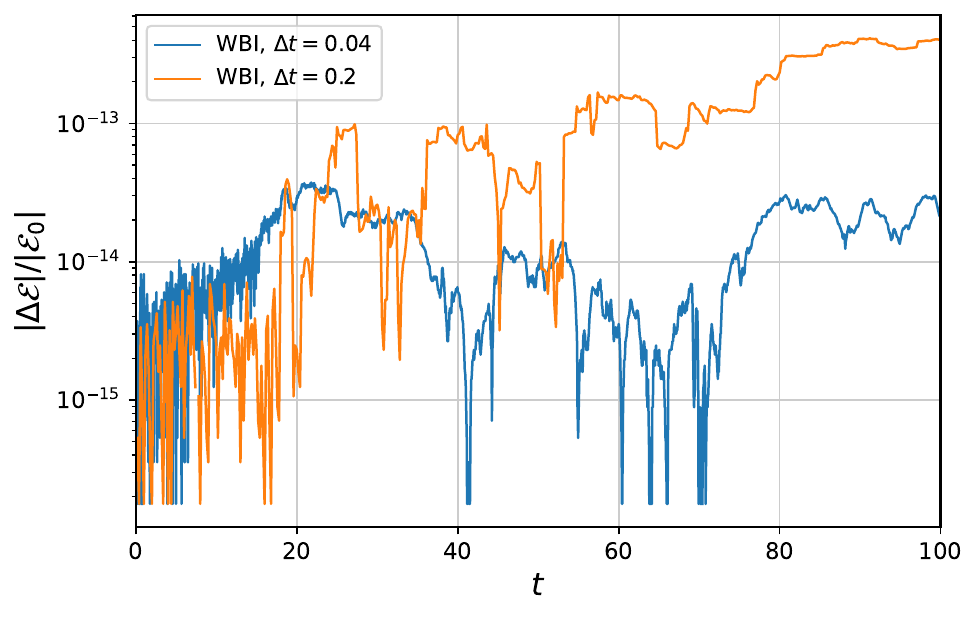}
\caption{Weibel energy drift.}
\label{fig:energy-comparison-wbi}
\end{subfigure}
\par\smallskip
\begin{subfigure}[t]{0.49\linewidth}
\centering
\includegraphics[width=\linewidth]{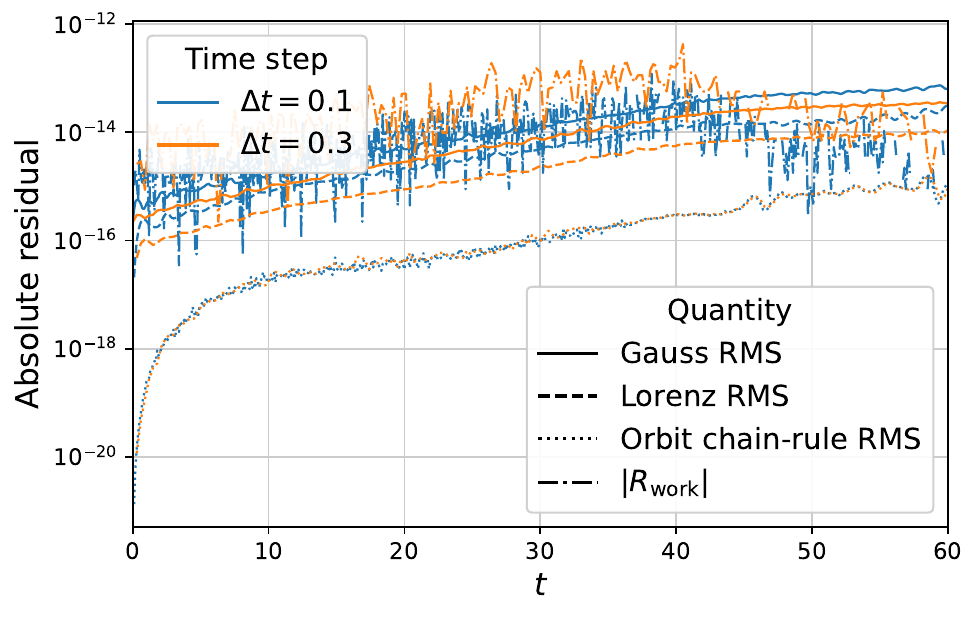}
\caption{TSI structural residuals.}
\label{fig:tsi-residuals}
\end{subfigure}\hfill
\begin{subfigure}[t]{0.49\linewidth}
\centering
\includegraphics[width=\linewidth]{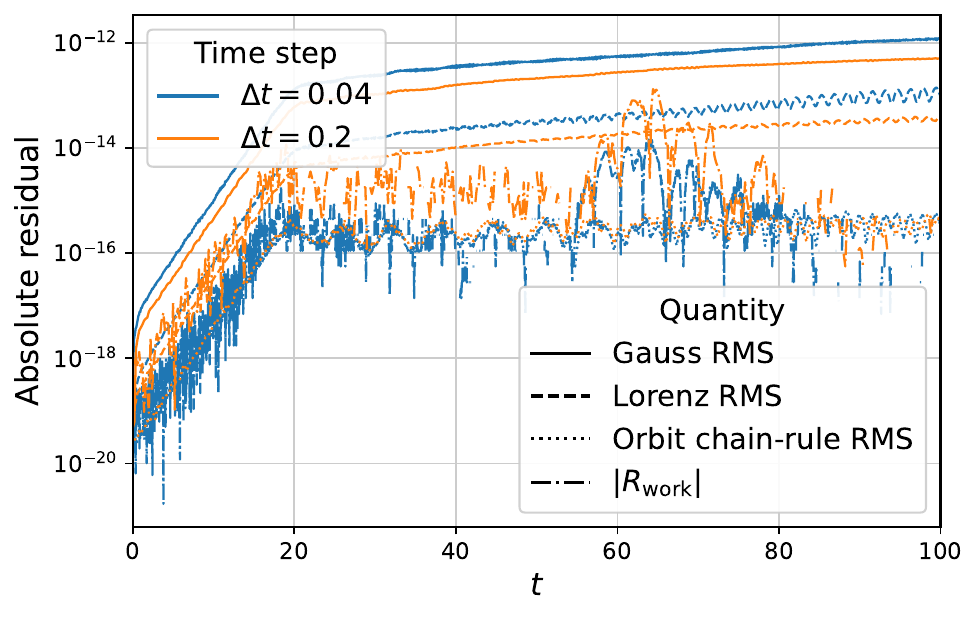}
\caption{Weibel structural residuals.}
\label{fig:wbi-residuals}
\end{subfigure}
\par\smallskip
\begin{subfigure}[t]{0.49\linewidth}
\centering
\includegraphics[width=\linewidth]{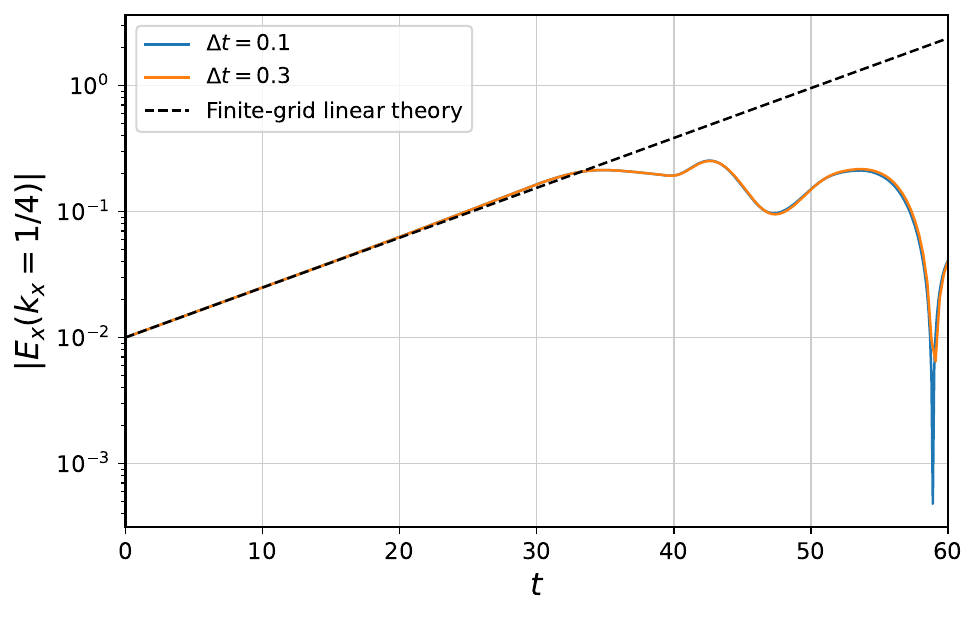}
\caption{TSI electric mode.}
\label{fig:mode-comparison-tsi}
\end{subfigure}\hfill
\begin{subfigure}[t]{0.49\linewidth}
\centering
\includegraphics[width=\linewidth]{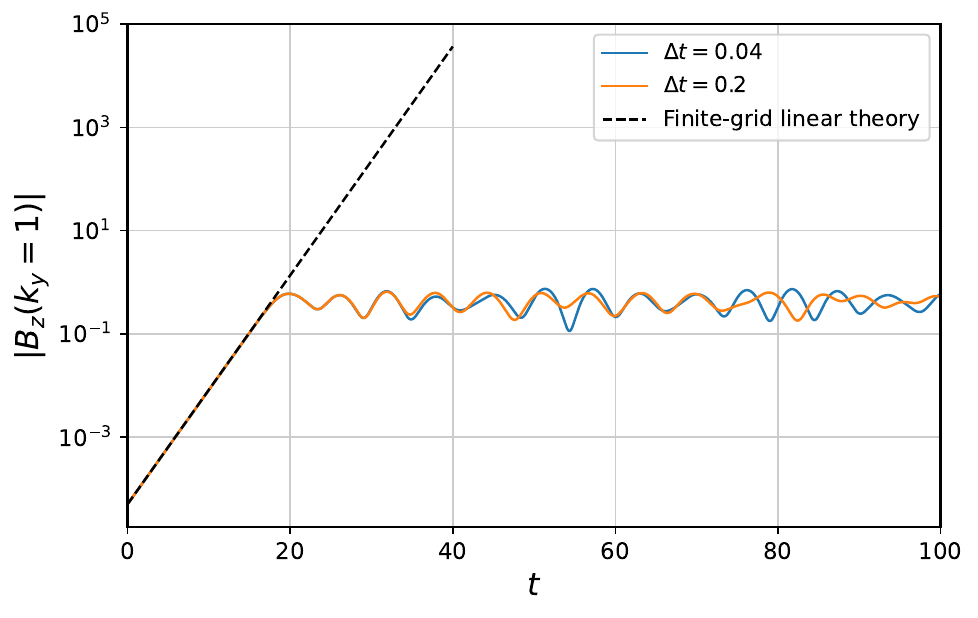}
\caption{Weibel magnetic mode.}
\label{fig:mode-comparison-wbi}
\end{subfigure}
\caption{Projected TSI (left, $\Delta t=0.1,0.3$) and Weibel (right, $\Delta t=0.04,0.2$). Blue/orange denote smaller/larger steps. Top: absolute relative energy drift. Middle: Gauss RMS (solid), Lorenz RMS (dashed), chain-rule RMS (dotted), and absolute deposit--gather work residual (dash-dotted). Bottom: unstable-mode amplitudes through \(t=60\) for TSI and \(t=100\) for Weibel. Growth fits use only $[0,2]$ for TSI and $[0,1.6]$ for Weibel. Exact zeros are omitted on logarithmic axes.}
\label{fig:diagnostic-comparison}
\end{figure}

\subsection{\texorpdfstring{\textcolor{black}{Cross-test solver behavior and summary}}{Cross-test solver behavior and summary}}
\label{sec:results-summary}
The iteration histories in
\cref{fig:iteration-comparison-tsi,fig:iteration-comparison-wbi}
quantify the nonlinear work in the four production runs. The outer
iteration converges the mesh current, while each map evaluation includes
inner Picard solves for the particle orbit velocities. Within each
instability test, the larger time step requires more outer iterations
on average. The mean (maximum) outer iteration counts are
\(6.360\) (11), \(7.555\) (10), \(6.042\) (8), and \(8.008\) (9)
for TSI-0.1, TSI-0.3, WBI-0.04, and WBI-0.2, respectively.
All runs remain below the outer limit of 80.  The same figures report the particle-averaged and maximum inner
iteration counts from the final map evaluation of each accepted step.
The step averages of the particle-averaged counts are \(3.267\),
\(3.718\), \(3.683\), and \(5.038\), with maximum particle counts
of 6, 9, 4, and 8, respectively, in the same run order.
These remain below the common inner limit of 64.
The particle populations and
physical problems differ, so the counts should not be interpreted
as direct measures of relative computational cost across the two problems.

To relate the projection directly to the production example, we compare TSI calculations with identical settings except for Nyquist projection.  For the prescribed longitudinal TSI setup, the initial transverse particle velocities vanish; their subsequent behavior diagnoses departure from the intended longitudinal evolution. This criterion does not apply to the Weibel example, where transverse motion is physical and is already present in the growing eigenmode.

We compare final $x$--$v_y$ projections in \cref{fig:tsi-transverse-comparison}. Define the charge-weighted transverse RMS over all particles by
\begin{equation}
 v_{\perp,\mathrm{rms}}=
 \left[\frac{\sum_{i=1}^{N_p}|q_i|(v_{y,i}^2+v_{z,i}^2)}{\sum_{i=1}^{N_p}|q_i|}\right]^{1/2}.
 \label{eq:transverse-rms-results}
\end{equation}
At $T=60$, this quantity is $0.366930$ in the unprojected run and $2.28724\times10^{-12}$ in the projected run. The unprojected state has substantial transverse motion, while the projected state remains close to the longitudinal configuration. 
The observed suppression is consistent with the removal of Nyquist potential components that can enter the particle magnetic generator despite having a vanishing corresponding mesh derivative, as discussed in \cref{subsec:adaptive_particle_step}. Projection removes these components from the coupled field--particle update; it does not directly reset transverse particle velocities. The matched comparison demonstrates suppression for this longitudinal TSI setup, but does not by itself establish the detailed sequence through which the unprojected transverse motion develops.

\begin{figure}[!htbp]
\centering
\captionsetup[subfigure]{font=footnotesize,skip=2pt,labelfont={color=black},justification=centering}
\begin{subfigure}[t]{0.49\linewidth}
\centering
\includegraphics[width=\linewidth]{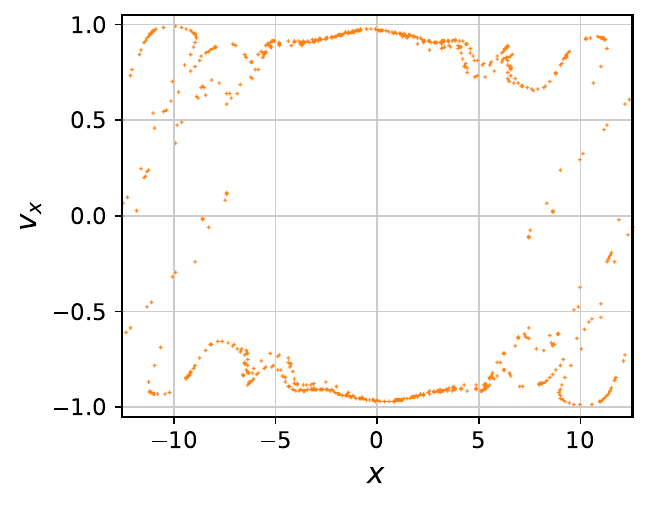}
\caption{TSI: $\Delta t=0.3$, $T=60$.}
\label{fig:phase-tsi-03}
\end{subfigure}\hfill
\begin{subfigure}[t]{0.49\linewidth}
\centering
\includegraphics[width=\linewidth]{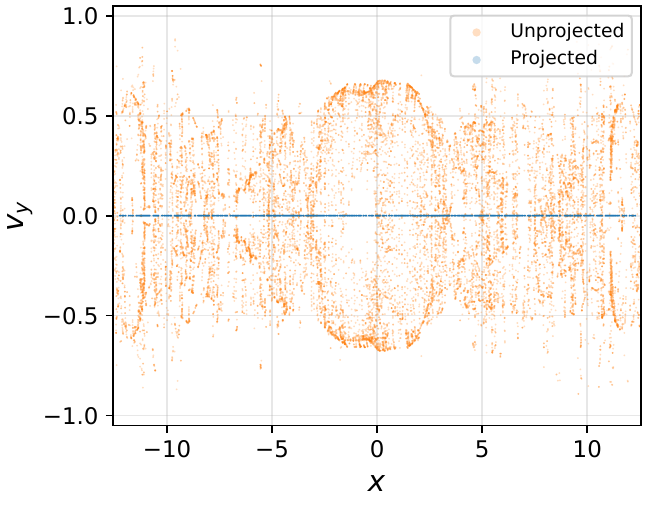}
\caption{TSI transverse motion: $\Delta t=0.1$, $T=60$.}
\label{fig:tsi-transverse-comparison}
\end{subfigure}
\par\smallskip
\begin{subfigure}[t]{0.49\linewidth}
\centering
\includegraphics[width=\linewidth]{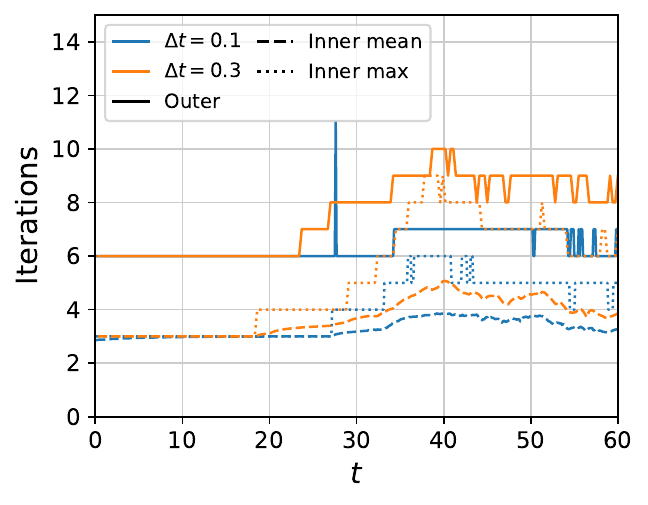}
\caption{TSI iteration counts.}
\label{fig:iteration-comparison-tsi}
\label{fig:inner-iterations-tsi}
\end{subfigure}\hfill
\begin{subfigure}[t]{0.49\linewidth}
\centering
\includegraphics[width=\linewidth]{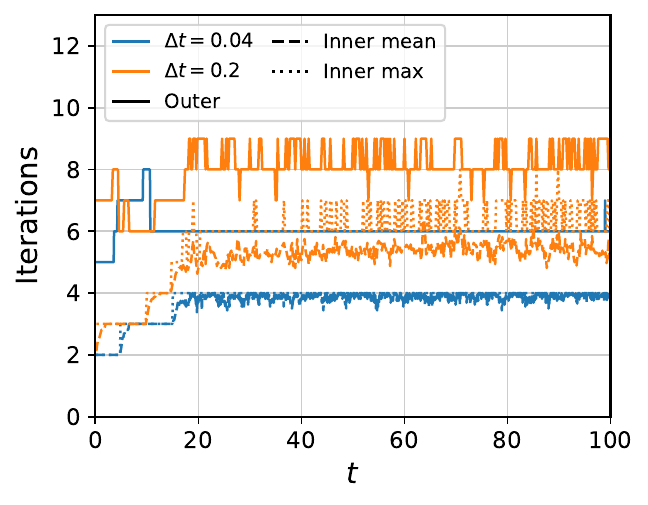}
\caption{Weibel iteration counts.}
\label{fig:iteration-comparison-wbi}
\label{fig:inner-iterations-wbi}
\end{subfigure}
\caption{Phase space, transverse motion, and nonlinear iterations. (a) Projected TSI $x$--$v_x$ portrait using a reproducible sample of 20,000 particles; reported moments use all particles. (b) Unprojected (orange) and projected (blue) TSI $x$--$v_y$ samples at the same $\Delta t=0.1$ and $T=60$. (c)--(d) Outer mesh-current counts (solid), inner particle mean (dashed), and inner particle maximum (dotted). Inner counts are from the final map evaluation and exclude earlier trials. In (c)--(d), blue and orange identify the smaller and larger field steps.}
\label{fig:phase-projection-iterations}
\end{figure}

We next examine how the particle-shape degree affects field regularity and nonlinear convergence. We compare TSI calculations with linear ($r=1$) and quadratic ($r=2$) B-splines, keeping the physical parameters, initialization, Nyquist projection, field step $\Delta t=0.1$, quadrature order, and nonlinear-solver controls fixed through $T=60$. Both calculations use the same convergence-triggered particle refinement policy: when a local particle solve fails to converge, it is retried using smaller particle time steps while the field step remains fixed. This allows us to investigate the spline-degree dependence beyond the failure of a full-step particle solve while retaining the same particle update and nonlinear iteration algorithms.
A knot is a location where the polynomial piece of the B-spline changes. The reported ``knot jump'' is the norm of the difference between reconstructed quantities evaluated at offsets \(\pm\epsilon\), where \(\epsilon=10^{-8}\Delta x\), along the probe direction from a knot. The vector-potential gradient uses the Frobenius norm. The electric-field regularity diagnostic uses $\EE_{\rm shape}=-\grad\phi_h-\UU_h$, obtained by differentiating the spline interpolant; here $\phi_h=\sum_g\phi_gS_g$ and $\UU_h=\sum_g\UU_gS_g$. This diagnostic is distinct from the gathered mesh electric field used in the particle-work balance. Maxima are taken over the sampled probe-line knots and saved diagnostic times through $T=60$. The maximum two-sided differences in the shape-derived electric field and the vector-potential gradient reach \(2.83\) and \(4.45\), respectively, for \(r=1\), and \(5.96\times10^{-8}\) and \(8.81\times10^{-8}\) for \(r=2\). These finite-offset diagnostics are consistent with discontinuous first derivatives for \(r=1\) and continuous first derivatives for \(r=2\).

Refinement is activated only in the degree-one calculation, first in the interval beginning at \(t=53.9\). The field step remains \(0.1\), while affected particles use local steps of \(0.05\) and occasionally \(0.025\). The degree-two calculation completes without activating refinement. This is convergence-triggered refinement, not an accuracy-controlled subcycling study. Such convergence-driven refinement for \(r=1\) was not needed in the nonrelativistic setting \cite{ChristliebChaconGong2026EC}. The regularity diagnostics support using the \(C^1\) quadratic B-spline without establishing that degree two is necessary or optimal.

The numerical evidence addresses three separate questions. The early mode fits assess agreement with the seeded finite-grid growing branches; energy and structural residuals assess the coupled discrete identities; and matched-time particle and mode diagnostics assess sensitivity to the field time step. The TSI pair shows close agreement in the reported longitudinal diagnostics through $T=60$. The Weibel pair conserves energy and resolves early growth at both steps, yet differs appreciably in its nonlinear state at $T=100$. Neither conservation nor suppression of Nyquist feedback alone establishes temporal accuracy.

\section{Conclusions}
\label{sec:conclusions}

We have formulated a relativistic generalized-momentum HC particle push for unstaggered potential PIC. The construction retains the generalized-momentum advantage:
the particle force does not require an explicit finite-difference
approximation of $\partial_t\AV$. The time derivative of the vector potential is represented by endpoint transformations and the orbit-discrete-gradient chain rule.  The new step splits the orbit-discrete-gradient contribution into the endpoint/chain-rule part $(\DA_i\AV)\vbar_i$ and the skew magnetic part $(\KA_i\AV)\vbar_i$, defined in \eqref{eq:D-split-skew}.  The skew part is advanced by a Higuera--Cary rotation, rather than by an additive magnetic kick.

For prescribed fields and frozen orbit data, the GM--HC
particle map preserves canonical phase-space volume. The transformation between $(\xx,\PP)$ and $(\xx,\pp)$ has determinant one, and the magnetic substep is the HC volume-preserving map with the effective magnetic field associated with \eqref{eq:D-split-skew}. This volume result is restricted to the frozen-orbit map. For the fully coupled implicit PIC solve, the same orbit construction supports the relativistic total-energy theorem: the average of the two electric-work secant
velocities is used consistently for the orbit, current, and particle work, while the CN field update supplies the opposite field-energy balance. Thus the converged GM--HC--CN fixed point conserves total relativistic energy up to nonlinear solver tolerance, orbit-quadrature error, and roundoff.
% For prescribed fields and frozen orbit data, the auxiliary GM--HC particle map preserves canonical phase-space volume: the endpoint transformations have determinant one, and the magnetic substep is the HC volume-preserving map generated by the orbit skew matrix. This result does not establish volume preservation of the converged orbit-dependent particle map or the fully coupled PIC map. For the fully coupled implicit solve, the orbit/current velocity is the average of the two electric-work secant velocities. Using this velocity consistently in the orbit, current, and particle work gives the relativistic total-energy balance, with the opposite field-work contribution supplied by the compatible CN update. Thus the converged GM--HC--CN fixed point conserves total relativistic energy up to nonlinear solver tolerance, orbit-quadrature error, and roundoff.

The solver uses an outer Anderson iteration on the mesh current and inner Picard particle solves. At convergence, particle enslavement recovers the same coupled discrete equations for the chosen discretization. For the same
Anderson history length, it reduces history storage from
$O(MN_p)$ to $O(MN_g)$. The self-adjoint Nyquist projection preserves the particle--grid work identity on the retained field space. In the matched TSI comparison, projection suppresses the substantial transverse motion observed without it and maintains the prescribed longitudinal evolution. The four main projected runs conserve total energy to within $6.18\times10^{-12}$ relative error over their reported intervals, without particle time subcycling, and their early fitted growth rates agree with the finite-grid predictions to within $0.2\%$. The TSI time-step pair gives close agreement in the reported longitudinal diagnostics through $T=60$, whereas the Weibel pair develops differences in the late-time magnetic-mode histories. The auxiliary spline comparison supports the robustness of quadratic particle shapes: convergence-triggered particle refinement is needed only for the degree-one calculation. These results distinguish conservation and nonlinear convergence from temporal accuracy. The particle-enslaved formulation provides a basis for future large-scale parallel implementations with accuracy-controlled particle subcycling and physics-based preconditioning \cite{ChenChaconBarnes2011,ChenChacon2015}.

\section*{Acknowledgments}
The authors acknowledge the use of ChatGPT for grammar
checking. The authors also thank Dr.~Lee Ricketson of LLNL for conversations about the HC pusher and energy conservation. The authors also acknowledge the Institute for Cyber-Enabled Research at Michigan State University for providing high-performance computing resources.

\bibliographystyle{siamplain}
\bibliography{ref}

\end{document}